\def\P{\mathbb{P}}
\def\E{\mathbb{E}}
\def\sfb{B}
\def\R{\mathbb{R}}
\def\11{\mathbf{1}}
\def\cq{1/10}
\newcommand\dif{\mathop{}\!\mathrm{d}}
\DeclarePairedDelimiterX{\inp}[2]{\langle}{\rangle}{#1, #2}
\newtheorem{thm}{Theorem}[section]
\newtheorem{proposition}[thm]{Proposition}
\newtheorem{lemma}[thm]{Lemma}
\newtheorem{cor}[thm]{Corollary}
\newtheorem{defn}[thm]{Definition}
\newtheorem{thmx}{Theorem}
\newtheoremstyle{cited}%
  {3pt}
  {3pt}
  {\itshape}
  {}
  {\bfseries}
  {.}
  {.5em}
  {\thmname{#1} \thmnumber{#2} \thmnote{\normalfont#3}}
\theoremstyle{cited}
\newtheorem{citedthm}[thmx]{Theorem}
\theoremstyle{definition}
\numberwithin{equation}{section}
\definecolor{qing}{RGB}{0, 153, 153}
\begin{document}
\title{Sharp threshold for the Ising perceptron model}
\author{Changji Xu\\[2ex]
Department of Statistics, The University of Chicago
}
\date{May 15, 2019}

\maketitle
\begin{abstract}
Consider the discrete cube $\{-1,1\}^N$ and a random collection of half spaces which includes each half space $H(x) := \{y \in \{-1,1\}^N: x \cdot y \geq \kappa \sqrt{N}\}$ for $x \in \{-1,1\}^N$ independently with probability $p$. Is the intersection of these half spaces empty? This is called the Ising perceptron model under Bernoulli disorder. We prove that this event has a sharp threshold; that is, the probability that the intersection is empty increases quickly from $\epsilon$ to $1- \epsilon$ when $p$ increases only by a factor of $1 + o(1)$ as $N \to \infty$.
\end{abstract}

\vspace{12pt}
\section{Introduction}
Consider the discrete cube $\Sigma_N = \{-1,1\}^N$. Let $\kappa \in \R$ be a constant, there is a half cube centered at each point $x \in \Sigma_N$,
\begin{equation}
		H(x) := \{y \in \Sigma_N: x \cdot y \geq \kappa \sqrt{N}\}\,.
\end{equation}
We now put an independent Bernoulli random variable $\omega_x$ with $\P(\omega_x = 1) = p$ at each point $x \in \Sigma_N$. This defines a probability measure $\P = \P_p$ on $\{0,1\}^{\Sigma_N}$.
We are interested in whether the intersection of half cubes centered at points in the random set $\{x : \omega_x = 1\}$ is empty, namely,  
\begin{equation}
\label{eq:intersection empty}
	 \bigcap_{x : \omega_x = 1} H(x) = \varnothing.
\end{equation}
This model is known as {\it the Ising perceptron under Bernoulli disorder}. It was conjectured that there is a critical value $p_{c} = p_c(\kappa,N)$, such that 
as $N \to \infty$, the probability that \eqref{eq:intersection empty} occurs tends to one when $p > (1+ \epsilon)p_{c} $, and tends to zero when $p < (1- \epsilon)p_{c} $. Moreover, the critical value $p_{c}$ is conjectured to be $\alpha_{c}(\kappa)N 2^{-N}$, where $\alpha_{c}(\kappa)$ is an explicit constant depending on $\kappa$. 

The main result of this paper resolves the first part of this conjecture. Let $p_N(\theta)$ be the probability such that 
\begin{equation}
	\P_{p_N(\theta)}\Big(\bigcap_{x : \omega_x = 1} H(x) = \varnothing \Big) = \theta\,.
\end{equation}
\begin{thm}
\label{thm:perceptron}
For any $\kappa \in \R$, $N$ sufficiently large, and all $\epsilon \geq (\log \log N)^{-\cq}$, we have
\begin{equation}
\label{eq:thm}
	 \frac{p_N(1 - \epsilon)}{p_N(\epsilon)} \leq 1 + (\log \log N)^{-\cq}\,.
\end{equation}
\end{thm}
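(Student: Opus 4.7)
My plan is to use Russo's formula combined with a structural analysis of the solution set. Let $A = \{\bigcap_{x: \omega_x = 1} H(x) = \varnothing\}$ and, on $A^c$, let $S(\omega)$ denote the (non-empty) solution set and $q(\omega) := 2^{-N}|\{x \in \Sigma_N : H(x) \cap S(\omega) = \varnothing\}|$ the fraction of half-cubes missing $S(\omega)$. On $A^c$ every active $x$ satisfies $S(\omega) \subseteq H(x)$, so the set of pivotal constraints is exactly $\{x : \omega_x = 0,\ H(x) \cap S(\omega) = \varnothing\}$, and Russo's formula simplifies to
\begin{equation*}
-\frac{d}{dp}\log \P_p(A^c) \;=\; \frac{2^N}{1-p}\, \E_p[q(\omega) \mid A^c].
\end{equation*}
Integrating from $p_N(\epsilon)$ to $p_N(1-\epsilon)$ and rearranging, the theorem follows from the uniform lower bound $p \cdot 2^N \cdot \E_p[q(\omega) \mid A^c] \gtrsim (\log\log N)^{\cq} \log(1/\epsilon)$ throughout the threshold window. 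Since the conjectured critical scale is $p_c \sim \alpha_c(\kappa)\, N\, 2^{-N}$ so that $p \cdot 2^N$ is of order $N$, this reduces further to showing
\begin{equation*}
\E_p[q(\omega) \mid A^c] \;\gtrsim\; (\log\log N)^{\cq}\log(1/\epsilon)/N.
\end{equation*}

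Heuristically $q$ should be of constant order: a single $y \in S$ forces $q$ to be at least the Gaussian tail $\P(\text{Rademacher sum} < \kappa\sqrt N)$, and more generally, as long as $S$ is \emph{narrow}\,---\,well-approximated by a bounded collection of nearby points\,---\,anti-concentration of sums of independent signs forces $q$ to be bounded below. I would therefore establish a structural lemma: conditional on $A^c$, with probability $1 - o(1)$ uniformly in the threshold window, the solution set is narrow in this quantitative sense. The plan for this lemma is an iterative revelation argument, exposing the active constraints one at a time and tracking the shrinkage of $S(\omega)$, exploiting the clean conditional independence provided by the Bernoulli disorder. Throughout, the transitive action of $\Sigma_N$ on itself by multiplication (which leaves both $\P_p$ and $A$ invariant) is used to reduce $x$-dependent estimates to a single representative.

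The main obstacle is precisely this structural lemma. If the solution set happened to be exponentially large and spread out over the cube, then for random $x$ the maximum $\max_{y \in S} x \cdot y$ would easily exceed $\kappa \sqrt N$, making $q$ exponentially small and rendering the Russo bound useless. Controlling $|S|$ and its geometry near the threshold is in fact the same difficulty that prevents identifying the sharp constant $\alpha_c(\kappa)$ itself — the conjectural picture involves frozen one-step replica symmetry breaking from statistical physics. A quantitatively weak version of narrowness should nevertheless be attainable by iterating an elementary estimate, but each step incurs a small polynomial loss in $\log\log N$, which is what produces the suboptimal exponent $\cq$ in the final bound. A direct appeal to the transitive symmetry in the style of Friedgut--Kalai would only yield an additive threshold width of order $1/N$, useless for the multiplicative ratio at the critical scale $p \sim N\, 2^{-N}$; the structural input sketched above is what does the real work.
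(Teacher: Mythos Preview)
Your Russo identity is correct, and the reduction to a lower bound on $\E_p[q(\omega)\mid A^c]$ is the natural first step. The gap is exactly where you flag it: the ``structural lemma'' that $S(\omega)$ is narrow. You need, uniformly for $p$ in the window, that a \emph{random} half-cube misses the entire solution set with probability $\gtrsim (\log\log N)^{\cq}/N$. If $|S(\omega)|$ is, say, $\exp(cN)$ and spread out, then $q(\omega)$ is doubly exponentially small, and nothing in your outline rules this out. The phrase ``iterative revelation argument \dots\ iterating an elementary estimate'' is not a proof; revealing constraints one at a time tells you how $|S|$ shrinks \emph{in expectation}, but controlling its geometry conditioned on $A^c$ throughout the window is precisely the unproven replica-symmetry-breaking picture you allude to. You are essentially proposing to prove the hard structural fact that the sharp-threshold theorem is designed to bypass.

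The paper takes a completely different route that never looks at $S(\omega)$. It first locates $p_*$ in the window with $I(p_*)\le 8(\log\log N)^{\cq}$ (Margulis--Russo plus pigeonhole), then invokes Hatami's pseudo-junta theorem in the strengthened form of Proposition~\ref{01Sharpthm}: low influence forces the existence, with $\P_{p_*}$-probability $\ge \epsilon_N/2$, of a $(1-\delta)$-\emph{boosting set} $\{x_1,\dots,x_k\}\subset\{x:\omega_x=1\}$ of size $k\le\exp((\log\log N)^{9/10})$. The key new idea is then to use the full automorphism group of the model---coordinate sign flips \emph{and} permutations in $S_N$, not just the transitive $\Sigma_N$-action you mention---to show that $k$ fresh uniform vectors $(Y_1,\dots,Y_k)$ are, after a ``gentle'' perturbation changing $O(N^{1-c})$ coordinates of each, an automorphic image of a fixed boosting set (Lemmas~\ref{adjust}, \ref{gentlemap}). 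This converts the abstract existence of boosting sets into the statement that $kN_*$ additional random half-cubes (with $N_*=N/\sqrt{\log N}$) kill the intersection with high probability, which is exactly an increment of $p$ by $O((\log\log N)^{-\cq}p_*)$. The entire argument is structure-free: it trades your unproved geometric control of $S(\omega)$ for Hatami's black box plus symmetry.
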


\subsection{Ising perceptron}
The analysis of the Ising perceptron, along with various perceptron models, can be traced back to physicists' work thirty years ago \cite{G87,GD88,KM89,M89}, motivated by understanding the storage capacity of the neuron network.

The value of the threshold $p_{c} = \alpha_{c}(\kappa) N 2^{-N}$ was predicted by Krauth and Mezard in \cite{KM89}. Efforts have been made to provide rigorous bounds on $p_{c}$. It is known (see \cite{KR98} by Kim and Roche, \cite{Tala99} by Talagrand) that for $\kappa = 0$, there exist absolute constants $c,c_1, c_2 \in (0,1)$ such that for large $N$,
		\begin{equation}
			\label{eq:ublb}
			c_1 N2^{-N} \leq p_N(e^{-cN}) \leq p_N(1 - e^{-cN}) \leq c_2  N2^{-N}\,.
		\end{equation}
Recently, Ding and Sun \cite{DS18} proved that for $\kappa = 0$ and any $p \leq (1 - \epsilon)p_c$, the probability that \eqref{eq:intersection empty} does not occur stays away from $0$ as $N \to \infty$ for a variant of this model, where the half spaces are centered at points chosen uniformly from the $N$-dimensional sphere of radius $N^{1/2}$. Such a variant is called {\it the Ising perceptron under Gaussian disorder}, where the conjectured threshold is the same as that of our model. 

We also mention another variant, called {\it spherical perceptron}, where one asks how many random half spaces in $\R^N$ is needed to cover the whole space $\R^N$. This model has a different conjectured threshold. And the conjectured threshold value has been proved to be correct rigorously by Shcherbina and Tirozzi \cite{ST03}. See also Stojnic \cite{Sto13} for some results on these models.

\subsection{Sharp threshold theorems}
\label{sec:ST-thms}
The threshold phenomenon in the context of general monotone boolean functions has been studied extensively. Consider a probability space $(\{0,1\}^n,\mu_p)$ with
\begin{equation}
 	\mu_p(\{x\})  = p^{\sum_{i = 1}^n x_i} (1 - p)^{n - \sum_{i = 1}^n x_i}\,.
 \end{equation}
 Let $f:\{0,1\}^n \rightarrow \{0,1\}$ be an increasing boolean function, namely,
$$ x_i \geq y_i ~~\text{ for all }i = 1,...n ~~~\implies ~~~f(x) \geq f(y)\,.$$
It is discovered independently by Margulis \cite{M74} and Russo \cite{Russo81} that the derivative $\mathrm{d}\E_p[f]/\mathrm{d}p$ is proportional to the total influence $I_f(p)$ of the function $f$, defined as follows:
\begin{equation}
	I_f(p) := \sum_{i \in [n]}\P(f(X) \not = f(X_{\{i\}}))\,,
\end{equation}
where $[n] := \{1,2,...,n\}$, $X \sim \mu_p$, and $X_{\{i\}}$ is the same as $X$ except that its $i$-th component $X_i$ is replaced by an independent copy of $X_i$.

\begin{thmx}[Margulis-Russo formula]Let $f$ be an increasing boolean function,
\label{Margulis-Russo formula}
	\begin{equation}
		\frac{\mathrm{d}\E_p[f]}{\mathrm{d}p} = \frac{I_f(p)}{2p(1-p)}\,.
	\end{equation}
\end{thmx}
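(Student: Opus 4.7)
The plan is to set up a monotone coupling that makes $X \sim \mu_p$ a deterministic function of $p$ for a fixed sample of the underlying randomness, and then differentiate. Concretely, I would draw i.i.d.\ uniforms $U_1,\ldots,U_n$ on $[0,1]$ and set $X_i(p) := \11[U_i \leq p]$, so that $X(p) \sim \mu_p$ and $\E_p[f] = \E[f(X(p))]$.

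The central step is a first-order comparison of $X(p)$ with $X(p+\delta)$: with probability $\delta + O(\delta^2)$ a given coordinate $i$ alone flips from $0$ to $1$, while the event that two or more coordinates flip contributes only $O(\delta^2)$. Conditioning on the identity of the flipped coordinate and using that the remaining coordinates still follow the $\mu_p$ law, I would get
\[
\frac{\dif \E_p[f]}{\dif p} = \sum_{i=1}^n \E\bigl[f(X^{(i,1)}) - f(X^{(i,0)})\bigr],
\]
where $X^{(i,b)}$ denotes $X(p)$ with coordinate $i$ forced to $b$ independently of $U_i$. Interchanging the limit and sum here is automatic because $\E_p[f]$ is a polynomial of degree at most $n$ in $p$. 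Invoking monotonicity of $f$ at this point, each summand collapses to the pivotal probability $\P[f(X^{(i,1)}) = 1,\ f(X^{(i,0)}) = 0]$.

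The remaining task is to identify this pivotal sum with $I_f(p)/(2p(1-p))$. I would expand $\P[f(X) \neq f(X_{\{i\}})]$ by conditioning on $(X_i, X_i')$: the two samples must disagree in coordinate $i$, contributing a factor $2p(1-p)$, and independently the remaining coordinates must realise a pivot at $i$. For monotone $f$ the pivot probability equals the one computed above, so
\[
\P[f(X) \neq f(X_{\{i\}})] = 2p(1-p)\,\P[f(X^{(i,1)}) = 1,\ f(X^{(i,0)}) = 0],
\]
and summing over $i$ and rearranging yields the formula.

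I do not expect a serious obstacle: the whole argument is first-order coupling plus some book-keeping. The one delicate point is to apply monotonicity at the right moment, so that $f(X^{(i,1)}) - f(X^{(i,0)}) \in \{0,1\}$ coincides with the pivot indicator on the nose rather than only in expectation; once that is in place the two sides of the identity line up term by term.
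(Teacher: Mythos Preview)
Your argument is correct and is essentially the standard proof of the Margulis--Russo formula: couple all the $\mu_p$'s via i.i.d.\ uniforms, compute the derivative as the sum of pivotal probabilities, and then unwind the paper's particular definition of influence (with an independently resampled coordinate) to produce the factor $2p(1-p)$. The one point worth being explicit about is that the event $\{f(X)\neq f(X_{\{i\}})\}$ factors exactly as $\{X_i\neq X_i'\}\cap\{i\text{ pivotal for }X_{[n]\setminus\{i\}}\}$ because $f$ is monotone; you say this, but it is the only place where monotonicity is genuinely used, so it deserves a sentence rather than a clause.

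There is nothing to compare against in the paper itself: Theorem~A is stated as a classical result attributed to Margulis and Russo and is not proved in the paper. It is quoted as background and then invoked (e.g.\ in the proof of Lemma~2.1). So your write-up would serve as a self-contained justification the paper simply omits.
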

This motivates researchers to understand boolean functions with small total influence. And it is realized that, roughly speaking, there is a sharp threshold unless the boolean function depends much on very few coordinates. See Russo \cite{R82}, Kalai, Kahn, and Linial \cite{KKL88}, Bourgain, Kahn, Kalai, Katznelson, and Linial \cite{BKKKL92}, Talagrand \cite{T94}, Friedgut \cite{F98, F99}, Bourgain \cite{B99}, and Hatami \cite{H12}. 

Our model corresponds to the case when $p \asymp \log n /n$ with $n = |\Sigma_N|$. In fact, it is more complicated to characterize boolean functions with small total influence when $p$ is very close to $0$ or $1$. Friedgut \cite{F99} gave the following characterization, which holds for $p$ in its entire range and monotone boolean functions corresponding to graph properties. He conjectured that it is also true for general monotone boolean functions.
\begin{citedthm}[\cite{F99}]
	Let $f$ be a monotone boolean function corresponding to some graph property. If $p \leq 1/2$, then for any $\epsilon>0$ there exists a  monotone function $g$, such that $\mu_p(f \not = g) \leq \epsilon$ and all minimal elements in $\{x : g(x) = 1\}$ have at most $K$ many $1$'s, where $K$ is a positive number depending only on $(I_f(p),\epsilon)$.
\end{citedthm}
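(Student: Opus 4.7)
The plan is to build $g$ incrementally as the monotone function generated by a finite antichain $\mathcal{H} = \{H_1, \ldots, H_m\}$ of small subgraphs, so $g(x) = 1$ iff $x$ contains an isomorphic copy of some $H_i$. At each step $t$ one new isomorphism class is appended, and I argue that $\mu_p(f \neq g_t)$ drops by at least a definite amount $\delta = \delta(M, \epsilon) > 0$ depending only on $M := I_f(p)$ and $\epsilon$. After $O(1/\delta)$ rounds the procedure halts, and $K$ is the maximum vertex-size of an $H_i$, which by construction depends only on $(M, \epsilon)$.

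The engine is a Bourgain-type \emph{seed lemma}: whenever $\mu_p(f = 1, g_t = 0) \geq \epsilon/2$, there is a set $S \subseteq [n]$ of bounded size such that $\mu_p(f = 1 \mid x_S \equiv 1, g_t = 0) \geq 1 - \eta_t$ for a prescribed $\eta_t$. I would extract this from the basic Bourgain/Friedgut sharp-threshold theorem for general monotone functions (which only guarantees a single-step boost in the conditional probability) by iterating the boost $O(\log(1/\eta_t))$ times and tracking how the seed size grows; crucially, the relevant total-influence input at each stage is controlled by $M$ together with the simple identity relating the influence of $f$ under $\mu_p$ to its influence under $\mu_p$ conditioned on $\{g_t = 0\}$, since $g_t$ itself depends only on bounded-size witnesses.

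The graph-property hypothesis now enters decisively: because $f$ is invariant under the natural action of $S_n$ on vertex labels, the seed $S$ can be recorded as an abstract subgraph $H_{t+1}$ on at most $|S|$ vertices, and every isomorphic copy of $H_{t+1}$ in $x$ is an equally valid seed. Setting $g_{t+1}(x) := g_t(x) \vee \mathbf{1}\{x \text{ contains a copy of } H_{t+1}\}$, a second-moment computation on the number of copies of $H_{t+1}$ in a $\mu_p$-random set — using that $H_{t+1}$ itself was selected to have non-trivial probability of appearing — yields $\mu_p(g_{t+1} = 1, g_t = 0) \geq \delta(M, \epsilon)$. Combined with the near-certainty guarantee that such copies force $f = 1$ up to error $\eta_t$, the residual mass $\mu_p(g_t = 0, f = 1)$ strictly decreases by at least $\delta(1 - \eta_t)$.

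The main obstacle I expect is bookkeeping the false positives $\mu_p(g = 1, f = 0)$: each added witness class contributes some mass where $x$ contains a copy of $H_i$ yet $f(x) = 0$, and these contributions must sum to at most $\epsilon/2$ across all iterations. I would assign a geometrically decaying error budget $\eta_t := \epsilon \cdot 2^{-t-2}$ to step $t$ and tune the near-certainty parameter in the seed lemma accordingly; this inflates the seed size only by factors polylogarithmic in $1/\epsilon$, so $K$ still depends only on $(M, \epsilon)$. A secondary subtlety is that the "residual" function $f \cdot \mathbf{1}\{g_t = 0\}$ is not itself monotone, which I would circumvent by applying the seed lemma to $f$ while working in the measure $\mu_p(\,\cdot\mid g_t = 0)$ and using FKG to transfer monotone bounds from $f$ to this conditional measure, since $\{g_t = 0\}$ is a decreasing event.
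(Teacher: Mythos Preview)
The paper does not prove this statement. Theorem~B is a \emph{cited} result (note the \texttt{citedthm} environment and the attribution to~\cite{F99}): it is quoted in Section~\ref{sec:ST-thms} as background on sharp-threshold theorems, alongside the cited results of Bourgain and Hatami, and the paper never returns to it. There is therefore no ``paper's own proof'' to compare your proposal against.

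As a side remark on the proposal itself: your plan is broadly in the spirit of Friedgut's original argument, but be careful with the logical dependencies. You invoke a Bourgain-type seed lemma as the engine; historically Bourgain's theorem appeared \emph{as the appendix} to Friedgut's paper~\cite{F99}, and its proof is what provides the hard analytic input. If you are allowed to assume Bourgain's result (Theorem~C in the present paper) as a black box, then an iteration along the lines you describe can indeed be made to work, though the second-moment step for ``copies of $H_{t+1}$ appear with definite probability'' is not automatic---it requires knowing that the seed $S$ was itself found inside a $\mu_p$-typical configuration, which is exactly the strengthening the present paper isolates in Proposition~\ref{01Sharpthm}. If instead you intend to prove Friedgut's theorem from scratch, you still owe the reader the Fourier-analytic or hypercontractive estimate that produces the seed in the first place; your outline treats that as given.
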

In the appendix to Friedgut's paper, Bourgain \cite{B99} gave an alternative characterization that works for general monotone functions. 
\begin{defn}
	\label{def-general}
 A subset of $S \subset [n]$ is said to be a $(1- \delta)$-boosting set if
	\begin{equation}
		\E_p [f(x)| x_S=(1,\ldots,1)] \ge 1 -\delta\,,
	\end{equation}
	where $x_S := (x_i)_{ i\in S}$.
\end{defn}	

\begin{citedthm}[\cite{B99}]
Let $f$ be an increasing boolean function. There exist $\epsilon,K>0$ depending only on $(\E_p[f],I_f(p))$ such that if $p< \epsilon$,  then there exists a  $(\E_p[f] + \epsilon)$-boosting set of cardinality at most $K$.
\end{citedthm}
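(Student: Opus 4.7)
The plan is to prove the contrapositive. Setting $\alpha := \E_p[f]$, I will show that if no subset $S \subseteq [n]$ of cardinality $\le K$ satisfies $\E_p[f \mid x_S = \mathbf{1}] \ge \alpha + \epsilon$, then $I_f(p)$ is bounded below by a positive quantity depending only on $(\alpha, \epsilon, K)$. One then takes $(\epsilon, K)$ in the theorem small/large enough that this lower bound exceeds the given value of $I_f(p)$, producing the desired contradiction. Throughout I would use monotonicity of $f$ and the $p$-biased Fourier expansion $f = \sum_S \hat{f}(S) \chi_S^{(p)}$ with $\chi_S^{(p)}(x) = \prod_{i \in S}(x_i - p)/\sqrt{p(1-p)}$; Margulis--Russo relates $I_f(p) \cdot p(1-p)$ to the weighted Fourier mass $\sum_{S} |S| \hat{f}(S)^2$, tying total influence to weighted degree.

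The core argument proceeds via a Fourier-level dichotomy. First, apply a Bonami--Beckner-type hypercontractive inequality in its sparse ($p$-biased) form to upper-bound $\sum_{|S| > K} \hat{f}(S)^2$ by a function of $I_f(p)$ and $K$; if $I_f(p)$ is small, this forces $f$ to be $L^2(\mu_p)$-close to a polynomial of degree at most $K$. Next, extract a junta: a monotone boolean function close to a low-degree polynomial is approximately determined by a bounded set $T$ of coordinates, with $|T|$ depending only on $\alpha, \epsilon, K$. Finally, by monotonicity and averaging over the $2^{|T|}$ possible settings of $x_T$, some subset $S \subseteq T$ has its all-ones setting boost $\E_p[f \mid x_S = \mathbf{1}]$ above $\alpha + \epsilon$, contradicting the hypothesis.

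The \textbf{main obstacle} is the hypercontractivity step in the sparse regime $p \to 0$: the classical Bonami--Beckner constants degrade as $p$ shrinks, whereas we need bounds depending only on $(\alpha, \epsilon, K)$. Bourgain's key technical contribution, which I would need to reproduce, is a two-function (or small-influence) form of hypercontractivity whose constants remain controlled as $p \to 0$. A secondary subtlety is that the junta extraction must be monotonicity-aware, so that the resulting boosting set arises from an \emph{upward-closed} witness (the all-ones pattern on $S$) rather than an arbitrary pattern in $\{0,1\}^T$; this requires an additional monotonicity argument on top of generic junta approximation, and one likely needs to invoke a $p$-biased analog of Talagrand's $L^1$--$L^2$ inequality to make the constants truly independent of $n$.
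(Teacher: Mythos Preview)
The paper does not prove this statement at all: it is Theorem~C, quoted from Bourgain's appendix~\cite{B99} as background in Section~\ref{sec:ST-thms}, with no proof given. The only related result the paper actually proves is Proposition~\ref{01Sharpthm}, which is a strengthening of Hatami's corollary and is established in Section~\ref{sec:STT} by modifying Hatami's pseudo-junta machinery, not by reproducing Bourgain's argument. So there is no ``paper's own proof'' to compare your proposal against.

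As for the proposal itself, your three-step plan (hypercontractivity $\Rightarrow$ low-degree approximation $\Rightarrow$ junta extraction $\Rightarrow$ boosting set) is essentially Friedgut's strategy, not Bourgain's, and the middle step is exactly where that strategy breaks down for general monotone functions at small $p$. Friedgut's conjecture that low-influence monotone functions are close to genuine juntas (with junta size depending only on $I_f$ and $\epsilon$) is still open in the non-symmetric setting; the graph-property case works only because of the extra transitive symmetry. Bourgain's contribution was precisely to \emph{bypass} junta approximation: his argument uses a sparse-regime hypercontractive estimate to show directly that if $I_f(p)$ is small then a nontrivial fraction of the Fourier mass of $f$ sits on sets $S$ with $|S|\le K$ \emph{and} $\E_p[f\mid x_S=\mathbf{1}]$ boosted, without ever producing a single junta that approximates $f$ globally. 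Your sketch correctly flags the degradation of Bonami--Beckner constants as the main obstacle, but then routes around it through a junta step that would require resolving Friedgut's conjecture. To make the proposal viable you would need to replace that step with Bourgain's direct boosting-set extraction from the low-degree Fourier spectrum.
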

In fact (see e.g. \cite{D14}), Bourgain's proof works for low-influence functions on any product space. Also, one can choose $(\epsilon,K)$ so that the probability that $\{i \in [n]: x_i = 1\}$ contains a $(\E_p[f] + \epsilon)$-boosting set is at least $\epsilon$.

Later, Hatami \cite{H12} significantly generalized Friedgut and Bourgain's work. 
To state Hatami's theorem, we need to define the ``pseudo-junta''. Let $(\Omega^n,\pi^n)$ be a product probability space and $\mathcal J = (J_S)_{S \subset [n]}$ be a collection of boolean functions defined on $(\Omega^n,\pi^n)$ where $J_S$ depends only on $x_S$. Define $J_{\mathcal J}(x) := \cup_{S \subset [n], J_S(x) = 1} S$ and let $\mathcal{F}_\mathcal{J}$ be the $\sigma$-field induced by $x \mapsto (J_\mathcal J(x), x_{J_\mathcal J(x)})$. 
\begin{defn}
\label{pseudo-junta}
For $k>0$, a $k$-pseudo-junta is a boolean function $h$ which is measurable with respect to $\mathcal{F}_\mathcal{J}$ for some $\mathcal J$ satisfying $\int |J_{\mathcal J}| \dif \pi^n \leq k$.  
\end{defn}

\begin{citedthm}[\cite{H12}]
\label{Hatami}
	Let $f$ be a boolean function defined on a product probability space $(\Omega^n,\pi^n)$. Then for any $\epsilon>0$, there exists a $K$-pseudo-junta $h$ such that $\pi^n(f \not = h) \leq \epsilon$, where $K>0$ depends only on $(\epsilon, I_f)$.
\end{citedthm}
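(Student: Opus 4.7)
The plan is to adapt the architecture of Friedgut's junta theorem to the product space setting. The obstruction to using an ordinary junta is that on a general product space with possibly non-uniform measures, the notion of a ``high-influence coordinate'' can itself depend on the values of the other coordinates, so no fixed small set of coordinates can serve as a junta for $f$. I would build the pseudo-junta structure $\mathcal{J}$ adaptively, via a tree-based greedy revelation procedure, and take $h$ to be the thresholded conditional expectation of $f$ given the revealed information.

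First I would assemble the quantitative tools. For a partial assignment $x_S$, define the conditional influences $I_i(f \mid x_S)$, and establish a Bonami--Beckner-type hypercontractive inequality on $(\Omega^n, \pi^n)$, tuned so that its constants depend only on $\epsilon$ and $I_f$. Combined with the Efron--Stein decomposition $f = \sum_S f_S$, the outcome I want is the following key fact: if every conditional influence $I_i(f \mid x_S)$ at a node is below a threshold $\tau = \tau(\epsilon, I_f)$, then the conditional variance of $f$ given $x_S$ is so small that rounding $\E[f \mid x_S]$ to $\{0,1\}$ misclassifies only an $\epsilon$-fraction of the branch.

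I would then run a greedy tree process. Starting at the root with the empty partial assignment, at a node corresponding to $x_S$ either (i) stop if all conditional influences are at most $\tau$ and declare the node a leaf, or (ii) branch on the coordinate $i^\ast(S)$ of maximal conditional influence. The pseudo-junta family is read off from the tree: for each leaf $\ell$ with queried coordinate set $S(\ell)$, let $J_{S(\ell)}(x) = 1$ precisely when $x$ reaches $\ell$. Then $J_{\mathcal J}(x)$ equals the set of coordinates on the root-to-$\ell(x)$ path, and $\int |J_{\mathcal J}| \dif \pi^n$ is exactly the expected tree depth. Finally, define $h$ by thresholding $\E[f \mid x_{J_{\mathcal J}(x)}]$ at $1/2$; by construction $h$ is measurable with respect to $\mathcal{F}_{\mathcal J}$, and the stopping rule controls $\pi^n(f \neq h)$ by $\epsilon$.

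The heart of the proof, and the main obstacle, is to bound the expected depth of the greedy tree by some $K = K(\epsilon, I_f)$ independent of $n$. A naive ``expected influence decreases by $\tau$ per query'' argument is too weak, because conditional influences can blow up along thin conditioning events. One must instead identify an $L^2$-style potential, for instance built from the Efron--Stein variance or from a noise-operator-based quantity, that is truly additive along the tree and whose total budget is controlled by $I_f$ via hypercontractivity. Making this potential argument work with constants depending only on $(\epsilon, I_f)$, and independent of the base measure $\pi$, is where the real content of Hatami's theorem lies.
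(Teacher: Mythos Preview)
This statement is a \emph{cited} theorem in the paper: it is Hatami's structure theorem from \cite{H12}, stated without proof and used as a black box (the paper only proves Proposition~\ref{01Sharpthm}, which sharpens a corollary of Hatami's result by invoking \cite[Theorem~2.8]{H12} directly). There is therefore no ``paper's own proof'' to compare against.

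As for your sketch on its own terms: the high-level architecture you describe---an adaptive revelation tree, branching on coordinates of large conditional influence, stopping when conditional influences fall below a threshold, and defining $h$ as the rounded conditional expectation at the leaves---is indeed in the spirit of Hatami's actual argument. But your proposal is really a plan, not a proof: you explicitly flag that the crux is bounding the expected tree depth by a function of $(\epsilon, I_f)$ alone, and you do not carry this out. The ``$L^2$-style potential'' you allude to is exactly the hard part, and Hatami's solution is considerably more delicate than a generic Efron--Stein variance budget; it relies on a careful two-scale decomposition of the noise operator and a specific hypercontractive estimate that is uniform over the base measure $\pi$. Without that ingredient made precise, the sketch does not close. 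If you intend to reproduce Hatami's theorem rather than cite it, you would need to supply that potential argument in full; otherwise, citing \cite{H12} as the paper does is the appropriate move.
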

When $f$ is an increasing function defined on $(\{0,1\}^n,\mu_p)$, Hatami proved (see \cite[Corollary 2.10]{H12}) the existence of $(1 - \delta)$-boosting set of cardinality at most $ K = K(\E_p[f],\delta, I_f)$ for any $\delta>0$. We point out that one can modify the proof of this result to get the following stronger result, which says that for any $\epsilon,\delta \in (0,1)$, conditioned on the event $\{f = 1\}$, the set $\{i \in [n]: x_i = 1\}$ contains a $(1- \delta)$-boosting set of cardinality at most $K$ with probability $1 - \epsilon$, where $K$ depends only on $(\epsilon \E_p[f],\delta,I_f(p))$. Such a property is needed in the proof of Theorem \ref{thm:perceptron}.

\begin{proposition}
\label{01Sharpthm}Let $f$ be an increasing boolean function. Denote by $\sfb_{\delta,k}$ the event that $\{i \in [n]: x_i = 1\}$ contains a $(1- \delta)$-boosting set of cardinality at most $k$.
If $p \leq 1/2$, then for any $\epsilon,\delta>0$, the following holds for $k = \exp(10^{12} \lceil I_f(p) \rceil^2 (\delta\epsilon)^{-2})$,
	\begin{equation}
			\mu_p\big(\{f(x) = 1\} \setminus \sfb_{\delta,k} \big) \leq  \epsilon\,.
	\end{equation}
\end{proposition}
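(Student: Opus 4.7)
My plan is to adapt the proof of Theorem~\ref{Hatami} (following the monotone corollary in \cite{H12}) so that, after monotonization, the pseudo-junta witness itself lies inside the 1-coordinates of $x$ and serves as a boosting set for almost every $x \in \{f=1\}$, with the error quantified uniformly rather than only up to the $\E_p[f]$ factor present in \cite[Corollary~2.10]{H12}.

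First I would apply Theorem~\ref{Hatami} to obtain a pseudo-junta $\mathcal J = (J_S)_{S \subset [n]}$ and a boolean $h \in \mathcal F_{\mathcal J}$ with $\mu_p(f \neq h) \leq \eta$ and $\int |J_{\mathcal J}|\,\mathrm d\mu_p \leq K_0$, where $\eta$ and $K_0$ are to be tuned below. Since $f$ is monotone, I would replace $h$ by its monotone hull $h^*(x) := \max_{y \leq x} h(y)$; the error grows only to $\mu_p(f \neq h^*) = O(\eta)$. Unwinding the junta structure, one then checks that $h^*$ admits a pseudo-junta representation with a witness $W(x) \subseteq \{i : x_i = 1\}$ whose expected size is $O(K_0)$. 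Markov's inequality then restricts attention to $\{|W(x)| \leq K_1\}$ with $K_1 := O(K_0/\epsilon)$, losing mass at most $\epsilon/2$.

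The crux is that $W(x)$ is then automatically a $(1-\delta)$-boosting set. By construction, $h^*(y) = 1$ whenever $y_{W(x)} = (1,\ldots,1)$, so for any fixed $S$,
\[
\mu_p(f = 0 \mid y_S = (1,\ldots,1)) \;\leq\; \frac{\mu_p(f \neq h^*)}{\mu_p(y_S = (1,\ldots,1))} \;\leq\; \eta\, p^{-|S|}.
\]
Taking $S = W(x)$ and enforcing $\eta \leq \delta\, p^{K_1}$ makes the right-hand side at most $\delta$, so $W(x)$ is $(1-\delta)$-boosting for every $x$ in the restricted event; summing the exceptional contributions yields $\mu_p(\{f=1\} \setminus \sfb_{\delta,K_1}) \leq \epsilon$.

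The main obstacle is obtaining the explicit form $k = \exp(10^{12}\lceil I_f(p)\rceil^2(\delta\epsilon)^{-2})$ without $p$-dependence. The naive constraint $\eta \leq \delta\, p^{K_1}$ is unsatisfactory because $p$ in our application is as small as $N/2^N$, which would force $K_1$ to inherit a $\log(1/p)$ factor. To remove this $p$-dependence one should not treat $y_S = (1,\ldots,1)$ as an external conditioning event but instead exploit that, inside the pseudo-junta structure, the law of $h^*$ already integrates out coordinates outside the witness; this is precisely the calculation in the proof of \cite[Corollary~2.10]{H12}, where the critical ratio becomes $\eta/\mu_p(\{W(x)=S\})$ and the pseudo-junta size bound absorbs the $p$-factors. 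Tracing this computation with the $L^4 \to L^2$ hypercontractive (Bonami-Beckner) step and calibrating $\eta$ and $K_0$ accordingly yields the stated $k \sim \exp(I_f^2(\delta\epsilon)^{-2})$ independent of $p$. Verifying the explicit constant $10^{12}$ amounts to chasing constants through the hypercontractive estimate, and I expect this to be the most technical step of the proof.
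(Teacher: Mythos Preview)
Your proposal correctly identifies the strategy and the obstacle, but it does not resolve the $p$-dependence, and the fix you sketch is not the mechanism that works.

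Two concrete gaps. First, the monotone-hull step is unjustified: from $\mu_p(f\neq h)\le\eta$ you cannot conclude $\mu_p(f\neq h^*)=O(\eta)$, since many $x$ with $h^*(x)=1$ can inherit that value from a single $y\le x$ with $h(y)=1$, $f(y)=0$; the mass of $\{f=0,\,h^*=1\}$ can therefore be far larger than $\eta$. Second, and more seriously, the ratio you propose, $\eta/\mu_p(\{W(x)=S\})$, still carries a hidden $p^{-|S|}$: the event $\{J_{\mathcal J}=S\}$ with $S$ nonempty typically has mass comparable to $p^{|S|}$, so you have not escaped the difficulty you diagnosed. Appealing vaguely to the computation in \cite[Corollary~2.10]{H12} does not help, because that corollary as stated does \emph{not} give a $p$-independent bound either; the point of the present proposition is precisely to upgrade it.

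The paper removes the $p$-dependence by a different device: the FKG inequality. One does not modify $h$. Instead one defines
\[
\beta \;:=\; \max_{(S,y)}\, \E_p\!\big[f \,\big|\, x_S=y,\; J_{\mathcal J}=S,\; \sfb_{\delta,k}^c\big],
\]
the maximum over atoms with $|S|\le k$ that meet $\sfb_{\delta,k}^c$, and shows $\beta<1-\delta$ by contradiction. Fixing $x_S=y$, the function $f$ is increasing in $x_{[n]\setminus S}$, while both $\{J_{\mathcal J}=S\}$ and $\sfb_{\delta,k}^c$ are \emph{decreasing} in $x_{[n]\setminus S}$ (for $p\le 1/2$ one may take every $J_T$ increasing, and $J_T$ depends only on $x_T$, so the constraints with $T\subseteq S$ are fixed by $y$ and the remaining ones are decreasing). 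FKG then strips off the extra conditioning for free:
\[
\E_p[f\mid x_S=y]\;\ge\;\E_p\!\big[f \,\big|\, x_S=y,\; J_{\mathcal J}=S,\; \sfb_{\delta,k}^c\big]\;\ge\;1-\delta,
\]
so the $1$-coordinates of $y$ form a boosting set of size at most $k$, contradicting $\sfb_{\delta,k}^c$. With $\beta<1-\delta$ in hand, the conclusion follows from
\[
\mu_p(f=1,\,\sfb_{\delta,k}^c)\;\le\;(1-\beta)^{-1}\,\mu_p(f\neq h)\;+\;\mu_p(|J_{\mathcal J}|>k),
\]
and no factor of $p^{-|S|}$ ever enters; the explicit $k$ comes straight from Hatami's quantitative bound plus one Markov inequality. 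Your outline is missing this FKG step, which is the entire content of the improvement over \cite[Corollary~2.10]{H12}.
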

The proof of this proposition is provided in Section \ref{sec:STT}.
\section{Proof Outline}
In this section, we provide a detailed outline of the proof for the main theorem, while proofs of a number of lemmas are postponed to later sections.

We consider $N$ large and $\epsilon_N \in [ (\log \log N)^{-\cq}, 1)$ for the rest of the paper. Let
\begin{equation}
\label{eq:def-p-star}
	p_* := \inf \left \{p \geq p_N(\epsilon_N) : I(p) \leq 8  (\log\log N)^{\cq} \right\}\,.
\end{equation}

\begin{lemma}
\label{pstar - ub}
$ p_N(\epsilon_N) \leq p_* \leq p_N(\epsilon_N) (1 + (\log \log N)^{-\cq}/2)$.
\end{lemma}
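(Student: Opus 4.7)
The lower bound $p_N(\epsilon_N)\le p_*$ is immediate from the definition of $p_*$ as an infimum over $p\ge p_N(\epsilon_N)$, so only the upper bound requires work.

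My plan is a contradiction argument driven directly by the Margulis--Russo formula. Write $\eta := (\log\log N)^{-\cq}$ and $J := [\,p_N(\epsilon_N),\,p_N(\epsilon_N)(1+\eta/2)\,]$. Suppose, toward contradiction, that $p_* > p_N(\epsilon_N)(1+\eta/2)$. Then by the definition \eqref{eq:def-p-star} of $p_*$ we have $I(p) > 8/\eta$ for every $p \in J$.

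Let $f(\omega) := \11\big\{\bigcap_{x:\omega_x = 1} H(x) = \varnothing\big\}$; this is an increasing boolean function on $\{0,1\}^{\Sigma_N}$, since adding an extra half-cube constraint can only shrink the intersection. Theorem~\ref{Margulis-Russo formula} therefore applies. Writing $p_1<p_2$ for the endpoints of $J$ and integrating the Margulis--Russo identity over $J$, and using the pointwise bound $1/(1-p)\ge 1$, I obtain
\begin{equation*}
	\E_{p_2}[f]-\E_{p_1}[f] \;=\; \int_{p_1}^{p_2}\frac{I(p)}{2p(1-p)}\,\dif p \;>\; \frac{4}{\eta}\int_{p_1}^{p_2}\frac{\dif p}{p} \;=\; \frac{4}{\eta}\log\!\Big(1+\frac{\eta}{2}\Big).
\end{equation*}
Since $\eta\to 0$ as $N\to\infty$, for $N$ large we have $\log(1+\eta/2)\ge \eta/3$, so the right-hand side exceeds $4/3$. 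This contradicts the trivial bound $\E_{p_2}[f]-\E_{p_1}[f]\le 1$, giving $p_*\le p_N(\epsilon_N)(1+\eta/2)$, as required.

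I do not expect any serious obstacle. The factor $8$ in the definition \eqref{eq:def-p-star} is tuned precisely so that $\tfrac{4}{\eta}\log(1+\eta/2) \to 2$ as $\eta \to 0$, comfortably exceeding the maximum possible total increase $1$ of the monotone quantity $\E_p[f]$ and thereby forcing $I$ to drop below $8/\eta$ somewhere in the short multiplicative interval $J$. Note that no a priori estimate on the size of $p_N(\epsilon_N)$ is needed: the bound $1/(1-p) \ge 1$ is valid on the entire range $(0,1)$, and this is the only place where the value of $p$ enters the estimate beyond the $1/p$ factor already absorbed into the $\log(p_2/p_1)$ term.
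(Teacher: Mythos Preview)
Your proof is correct and follows essentially the same route as the paper's: both arguments integrate the Margulis--Russo formula over the interval above $p_N(\epsilon_N)$, use the defining lower bound $I(p)>8(\log\log N)^{\cq}$ there, drop the harmless factor $1/(1-p)\ge 1$, and compare the resulting integral to the trivial ceiling $\E_p[f]\le 1$. The only cosmetic differences are that the paper argues directly (integrating up to $p_*$ and bounding $1/p\ge 1/p_*$ to get a linear inequality $p_*-p_N(\epsilon_N)\le \tfrac{\eta}{4}p_*$), whereas you argue by contradiction over the fixed interval $J$ and keep the $\int dp/p=\log(1+\eta/2)$ term; neither choice affects the substance.
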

\begin{proof}
By Theorem \ref{Margulis-Russo formula} (Margulis-Russo formula), 
\begin{equation*}
\begin{split}
		\P_{p_*}\Big(\bigcap_{x : \omega_x = 1} H(x) = \varnothing \Big) &= \epsilon_N + \int_{p_N(\epsilon_N)}^{p_*} \frac{I(p)}{2p(1-p)} \dif p \geq \big(p_* -  p_N(\epsilon_N) \big) \, \frac{8(\log\log N)^{\cq}}{2p_*}\,.
\end{split}
\end{equation*}
This yields the desired result.
\end{proof}
The following result is an analogue of \eqref{eq:ublb} for general $\kappa$. It will be proved in Section \ref{sec:ublb} by modifying Talagrand's proof in \cite{Tala99}.
\begin{lemma}
\label{ublb-kappa}
	There exists constants $c, c_3, C_1>0$ depending only on $\kappa$, such that for large $N$,
			\begin{equation} 
			\label{eq:ublb-kappa}
			c_3 N2^{-N} \leq p_N(e^{-cN})\leq p_N(1 - e^{-cN})  \leq C_1  N2^{-N}\,.
		\end{equation}
\end{lemma}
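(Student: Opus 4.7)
The plan is to adapt Talagrand's proof from \cite{Tala99} to accommodate arbitrary $\kappa \in \R$. Throughout, let $N_0 := |\bigcap_{x:\omega_x=1} H(x)|$ and write $A(y) := \{x \in \Sigma_N : x \cdot y < \kappa\sqrt{N}\}$, so $y$ belongs to the intersection iff $\omega_x = 0$ for every $x \in A(y)$. A central limit computation yields $|A(y)| = \Phi(\kappa) 2^N(1 + o(1))$ uniformly in $y$, where $\Phi$ is the standard normal CDF, and therefore $\E_p[N_0] = 2^N(1-p)^{\Phi(\kappa) 2^N (1 + o(1))}$. For the upper bound $p_N(1 - e^{-cN}) \le C_1 N 2^{-N}$, Markov's inequality suffices: choosing $C_1 > (\log 2 + c)/\Phi(\kappa)$ gives $\E_p[N_0] \le e^{-cN}$ at $p = C_1 N 2^{-N}$, and hence $\P_p(N_0 \ge 1) \le e^{-cN}$.

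For the lower bound $p_N(e^{-cN}) \ge c_3 N 2^{-N}$, take $c_3 < \log 2 / \Phi(\kappa)$ so that $\E_p[N_0]$ is exponentially large at $p = c_3 N 2^{-N}$; the goal is $\P_p(N_0 = 0) \le e^{-cN}$. I follow Talagrand's strategy: a second-moment computation boosted by a sprinkling (two-round exposure) step. For the second moment, group pairs $(y,y')$ by Hamming distance $k$ and use the bivariate Gaussian approximation $|A(y) \cap A(y')|/2^N \approx \psi_\kappa(1 - 2k/N)$, where $\psi_\kappa(\rho) := \P(Z_1 < \kappa,\, Z_2 < \kappa)$ with $(Z_1, Z_2)$ jointly standard normal of correlation $\rho$; in particular $\psi_\kappa(0) = \Phi(\kappa)^2$. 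The adaptation to arbitrary $\kappa$ amounts to replacing the closed form $\psi_0(\rho) = \frac{1}{4} + \frac{1}{2\pi}\arcsin\rho$ used in \cite{Tala99} by asymptotic estimates on $\psi_\kappa$ near $\rho = 0$, derivable from the Mehler expansion of the bivariate normal density or a direct Taylor expansion.

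These pairwise estimates then feed into Talagrand's exposure scheme: split the Bernoulli field $\omega$ into several asymptotically independent rounds, so that a constant-probability second-moment bound on $\{N_0 > 0\}$ in each round combines into the desired $1 - e^{-cN}$ estimate. The main obstacle is the exponential overhead coming from nearly orthogonal pairs: the first-order correction $\psi_\kappa(\rho) - \Phi(\kappa)^2$, when summed against binomial weights, contributes a factor $\exp(\Theta(c_3 N))$ to $\E_p[N_0^2]$, so that Paley--Zygmund alone gives only $\P_p(N_0 > 0) \ge e^{-\Theta(N)}$ rather than $1 - e^{-\Theta(N)}$. Talagrand's sprinkling step is precisely what integrates out this correlation, and the bulk of the work is to track the $\kappa$-dependence carefully through each Gaussian integral in his argument and to verify that $c_3,c > 0$ can indeed be chosen strictly positive throughout.
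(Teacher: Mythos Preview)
Your upper bound via the first moment matches the paper. For the lower bound, however, you have misidentified what Talagrand actually does in \cite{Tala99}, and the approach you sketch has a genuine gap.

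You describe Talagrand's strategy as ``a second-moment computation boosted by sprinkling,'' but \cite{Tala99} does not argue this way. The paper isolates the one place where Talagrand uses $\kappa=0$: his Lemma~4.2, restated here as Lemma~\ref{tal-lemma}, which says that if $\sigma^1,\dots,\sigma^r\in\Sigma_N$ have pairwise normalized overlaps at most $1/2$, then a single uniform $\xi$ satisfies $\P\bigl(\sigma^p\notin H(\xi)\ \text{for all}\ p\bigr)\le K r^{-\gamma}$. The proof combines Sudakov minoration for Bernoulli processes (giving $\E\sup_p \xi\cdot\sigma^p \gtrsim \sqrt{N\log r}$) with concentration of the supremum; both ingredients extend to arbitrary $\kappa$ with only the threshold shifted, which is all the paper needs to check. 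The remainder of Talagrand's proof (his Theorem~4.1, Corollary~4.3, and the proof of Theorem~1.3) is an iterative scheme that maintains a large well-separated subset of the running intersection through successive batches of constraints; it uses $\kappa$ nowhere else and therefore carries over verbatim.

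The route you propose runs into exactly the obstacle you name but does not resolve it. The second-moment ratio $\E[N_0^2]/(\E[N_0])^2$ is of order $e^{\Theta(c_3^2 N)}$, so Paley--Zygmund yields only $\P(N_0>0)\ge e^{-\Theta(N)}$. Sprinkling cannot repair this: the event $\{N_0>0\}$ is decreasing in $\omega$, so independent rounds cannot be combined by a union-of-successes argument, and splitting the $c_3 N$ constraints into batches gives no control over whether the \emph{same} point survives every batch. What actually kills the correlation in \cite{Tala99} is not a second-moment device but the Sudakov lemma above, which guarantees enough surviving well-separated points after each batch to iterate. Your outline supplies no substitute for that mechanism, so as written the lower bound is not established.
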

\subsection{Existence of boosting sets}

To utilize the sharp threshold theorem, we define the boosting sets of our model.

\begin{defn}
\label{def-boost-per}
A set $\{x_1,\dots,x_k\} \subset \Sigma_N$ is said to be a $(1 - \delta)$-boosting set if
\begin{equation}
	\P_{p_*}\Big(\bigcap_{x: \omega_x = 1} H(x)  \cap \bigcap_{j = 1}^{k} H(x_j) = \varnothing\Big) \geq 1 - \delta\,.
\end{equation}
\end{defn}
This is consistent with Definition \ref{def-general}; In our model, $n = |\Sigma_N|$ and $i \in [n]$ corresponds to $x \in \Sigma_N$. Also, $\{x \in \Sigma_N: \omega_x = 1\} \cup \{x_1,\dots,x_k\}$ has the same distribution as $\{x \in \Sigma_N: \omega_x = 1\}$ conditioned on $\{\omega_{x_i} = 1 \text{ for }i=1,\dots,k\}$. 

We now apply Proposition \ref{01Sharpthm} to our model to get the following result about $(1 - \delta)$-boosting sets.
\begin{lemma}
\label{SHT}
For any $\delta \in (0,1)$, with $\P_{p_*}$-probability at least $\epsilon_N/2$, there exists a subset of $\{x : \omega_x = 1\}$ which is a $(1- \delta)$-boosting set of cardinality
\begin{equation}
\label{eq:size k}
	k \leq \exp(10^{15}(\delta \epsilon_N)^{-2} (\log\log N)^{1/5} )\,. 
\end{equation}
\end{lemma}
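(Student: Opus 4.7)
The plan is a direct application of Proposition~\ref{01Sharpthm} to the monotone increasing boolean function
\[
	f(\omega) := \mathbf{1}\Big\{\bigcap_{x:\,\omega_x=1} H(x) = \varnothing\Big\}
\]
on the product space $(\{0,1\}^{\Sigma_N},\P_{p_*})$. Identifying the coordinate index set $[n]$ with $\Sigma_N$, the abstract Definition~\ref{def-general} specializes to Definition~\ref{def-boost-per}: conditioning on $\omega_{x_j}=1$ for $j=1,\dots,k$ produces the distribution of $\{x:\omega_x=1\}\cup\{x_1,\dots,x_k\}$, so the boosting inequality $\E_{p_*}[f\mid\omega_{x_j}=1,\ j\le k]\ge 1-\delta$ is exactly the condition displayed in Definition~\ref{def-boost-per}. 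A boosting subset of $\{x:\omega_x=1\}$ in one sense is therefore a boosting subset in the other.

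Next I would verify the hypotheses of Proposition~\ref{01Sharpthm}. Monotonicity of $f$ together with $p_*\ge p_N(\epsilon_N)$ gives $\E_{p_*}[f]\ge \epsilon_N$. Each single-coordinate flip probability is a polynomial in $p$, so $I(p)$ is continuous; hence the sublevel set $\{p:I(p)\le 8(\log\log N)^{\cq}\}$ is closed and the infimum in~\eqref{eq:def-p-star} is attained, yielding $I(p_*)\le 8(\log\log N)^{\cq}$. To secure $p_*\le 1/2$, one may assume $\epsilon_N\le 1/2$ without loss of generality (the target~\eqref{eq:thm} is trivial when $\epsilon_N>1/2$, since then $p_N(1-\epsilon_N)\le p_N(\epsilon_N)$); then $\epsilon_N<1-e^{-cN}$ for large $N$, and Lemmas~\ref{ublb-kappa} and~\ref{pstar - ub} give
\[
	p_* \le 2\,p_N(\epsilon_N) \le 2\,p_N(1-e^{-cN}) \le 2 C_1 N 2^{-N} \le 1/2 .
\]

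Applying Proposition~\ref{01Sharpthm} with its parameter $\epsilon$ set to $\epsilon_N/2$ and with the given $\delta$ then produces
\[
	\mu_{p_*}\big(\{f=1\}\setminus \sfb_{\delta,k_0}\big) \le \tfrac{\epsilon_N}{2}, \qquad k_0 := \exp\!\big(10^{12}\lceil I(p_*)\rceil^2\,(2/(\delta\epsilon_N))^2\big).
\]
Since $\lceil 8(\log\log N)^{\cq}\rceil^2 \le 81(\log\log N)^{1/5}$ for large $N$ and $(2/(\delta\epsilon_N))^2 = 4(\delta\epsilon_N)^{-2}$, the exponent in $k_0$ is bounded by $4\cdot 81\cdot 10^{12}(\delta\epsilon_N)^{-2}(\log\log N)^{1/5}$, comfortably within~\eqref{eq:size k}. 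Combining with $\P_{p_*}(f=1)\ge \epsilon_N$ yields $\P_{p_*}(\sfb_{\delta,k_0})\ge \epsilon_N - \tfrac{\epsilon_N}{2} = \tfrac{\epsilon_N}{2}$, which is the asserted conclusion. All the substantive content sits inside Proposition~\ref{01Sharpthm}; the only points demanding care here are the translation of definitions and the bookkeeping of constants in the exponent of $k_0$, so I do not expect any genuine obstacle in this step.
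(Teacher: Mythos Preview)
Your proof is correct and follows essentially the same approach as the paper: apply Proposition~\ref{01Sharpthm} with $\epsilon=\epsilon_N/2$, bound $k_0$ using $I(p_*)\le 8(\log\log N)^{\cq}$, and conclude via $\P_{p_*}(f=1)\ge \epsilon_N$. You supply slightly more detail than the paper does (the continuity argument for $I(p_*)$ and the verification that $p_*\le 1/2$), but the structure and constants match.
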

\begin{proof}
Recall that $\sfb_{\delta,k_0}$ is the event that $\{x : \omega_x = 1\}$ contains a $(1- \delta)$-boosting set of cardinality at most $k_0$. By Proposition \ref{01Sharpthm} and the definition \eqref{eq:def-p-star}, we get
\begin{equation}
	\P_{p_*}\Big( \Big\{\bigcap_{x: \omega_x = 1} H(x) = \varnothing\Big\} \cap B_{\delta,k_0}^c \Big) \leq \epsilon_N/2
\end{equation}
for $k_0 = \exp(10^{12} \lceil I(p_*) \rceil^2 (\delta\epsilon_N/2)^{-2}) \leq \exp(10^{15}(\delta \epsilon_N)^{-2} (\log\log N)^{1/5} )$. Therefore,
\begin{equation*}
	\P_{p_*}\big( B_{\delta,k_0} \big) \geq\P_{p_*}\Big( \Big\{\bigcap_{x: \omega_x = 1} H(x) = \varnothing\Big\} \cap B_{\delta,k_0} \Big) \geq \P_{p_*}\Big(\bigcap_{x: \omega_x = 1} H(x) = \varnothing\Big) -\epsilon_N/2\,.
\end{equation*}
This is at least $\epsilon_N/2$ by $p_* \geq p_N(\epsilon_N)$ as in \eqref{eq:def-p-star}, and thus we complete the proof.
\end{proof}
We fix a small $\delta>0$. What this lemma tells us is that not only is there a $(1- \delta)$-boosting set, we actually have plenty of them. In addition, we will prove that, roughly speaking, most of the sequence of length $k$ in $\Sigma_N$ are the same, up to some automorphisms and minor changes on each vector. Therefore, we wish that $k$ independent uniform random vectors in $\Sigma_N$ would have a similar effect as a $(1- \delta)$-boosting set. And we will prove that by increasing $p_*$ to $p_*+\Delta$ for some small $\Delta$ (which will produce roughly $\Delta 2^N$ more independent half spaces), the probability that the intersection is empty will become very close to one, which will yield $p_* + \Delta \geq p_N(1 - \epsilon_N)$. To make this precise, we need some notations for the automorphisms on $\Sigma_N$.

 \subsection{Automorphisms on $\Sigma_N$}
 \label{sec:auto}
To be able to make use of the symmetry of our model, we introduce the notations of two types of automorphisms on $\Sigma_N$. 

The first type is sign switching, which will be denoted by $g = (g^1,\dots,g^N) \in \Sigma_N$. For all $x =(x^1,\dots,x^N)\in \Sigma_N$, we define
\begin{equation}
	g \circ x := (g^1 x^1,\dots,g^N x^N)\,.
\end{equation}

The second type is label exchanging, which will be denoted by permutations in the symmetric group $S_N$. For all $\sigma \in S_N$ and $x \in \Sigma_N$, we define
\begin{equation}
	\sigma \circ x := (x^{\sigma^{-1}(1)},\dots, x^{\sigma^{-1}(N)})\,.
\end{equation}

When an automorphism acts on a subset of $\Sigma_N$ or a sequence of vectors in $\Sigma_N$, it acts on each member in that set or sequence. Namely, for any $g \in \Sigma_N \cup S_N$,  we define
\begin{equation}
\begin{split}
	g \circ (x_1,\dots,x_k)  &:= (g \circ x_1,\dots, g \circ x_k)\,, \quad \text{for all}~(x_1,\dots,x_k) \in \Sigma_N^k, k \geq 1\,,\\
	g \circ A  &:= \{g \circ x : x \in A\}\,, \quad \text{for all}~A \subset \Sigma_N\,.	
\end{split}
\end{equation}
The following lemma says under any aforementioned automorphism $g$, the half cube centered at $x$ will be mapped to the half cube centered at $g \circ x$ for all $x \in \Sigma_N$.
\begin{lemma}
\label{autoH}
	For any $g,x \in \Sigma_N$, permutation $\sigma \in S_N$,
	\begin{equation}
		H(g\circ x) = g\circ H(x),\quad H(\sigma \circ  x) = \sigma \circ H(x)\,.
	\end{equation}
\end{lemma}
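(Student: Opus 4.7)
The plan is to verify both identities directly from the definitions by establishing the elementary bilinear identities $(g\circ x)\cdot y = x \cdot (g\circ y)$ and $(\sigma\circ x)\cdot y = x\cdot (\sigma^{-1}\circ y)$, and then using that $g$ acts as an involution on $\Sigma_N$ while $\sigma$ acts invertibly. Throughout, the key observation is that the constraint $x\cdot y \geq \kappa\sqrt{N}$ defining $H(x)$ is preserved by moving the automorphism from the first factor to the (inverse) automorphism on the second factor.

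For the sign-switching case, I would compute
\begin{equation*}
	(g\circ x) \cdot y = \sum_{i=1}^N g^i x^i y^i = \sum_{i=1}^N x^i (g^i y^i) = x \cdot (g\circ y),
\end{equation*}
using that $g^i \in \{-1,1\}$ so scalars commute freely. Hence $y \in H(g\circ x)$ iff $g\circ y \in H(x)$. Since $(g^i)^2 = 1$ gives $g\circ(g\circ y) = y$, this is equivalent to $y \in g\circ H(x)$.

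For the permutation case, I would use the change of index $j = \sigma^{-1}(i)$ to get
\begin{equation*}
	(\sigma \circ x) \cdot y = \sum_{i=1}^N x^{\sigma^{-1}(i)} y^i = \sum_{j=1}^N x^j y^{\sigma(j)} = x \cdot (\sigma^{-1}\circ y),
\end{equation*}
where the last equality uses $(\sigma^{-1}\circ y)^j = y^{\sigma(j)}$. Thus $y \in H(\sigma \circ x)$ iff $\sigma^{-1}\circ y \in H(x)$ iff $y \in \sigma \circ H(x)$.

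Nothing here is a genuine obstacle; the only point requiring a bit of care is keeping track of whether $\sigma$ or $\sigma^{-1}$ appears when moving the action across the inner product, and verifying that the definition $\sigma\circ x = (x^{\sigma^{-1}(1)},\dots,x^{\sigma^{-1}(N)})$ is consistent with a left group action so that applying $\sigma$ on both sides of the containment statement is valid.
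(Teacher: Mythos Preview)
Your proof is correct and follows essentially the same route as the paper: both arguments establish the bilinear identities $(g\circ x)\cdot y = x\cdot(g\circ y)$ and $(\sigma\circ x)\cdot y = x\cdot(\sigma^{-1}\circ y)$ by direct computation, then translate these into the set equalities using that $g$ is an involution and $\sigma$ a bijection.
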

Fix $k \geq 1$. Suppose that we are given a sequence of vectors $\mathbf Y = (y_1,\dots,y_k) \in \Sigma_N^k$. We want to record $\mathbf Y$ in the following way. We first record $y_1$. And then consider $y_1 \circ y_2,...,y_1 \circ y_k$, which are the coordinates of $y_2,...,y_k$ relative to $y_1$. It can be viewed alternatively as a sequence of length $N$ with each component taking value in $\{-1,1\}^{k-1}$. Hence, it suffices to record for each $a = (a_1,...,a_{k-1}) \in \{-1,1\}^{k-1}$, the positions of $a$ in this sequence. More precisely, we let
\begin{equation}
\begin{split}
	I_a(\mathbf Y) &:= \{ j \in [N]: \text{the $j$-th component of $y_1 \circ y_i$ is $ a_{i-1}$ for } i = 2,\dots,k\}\,,\\
	\text{and}\quad \varphi_I(\mathbf Y) &:= (I_a(\mathbf Y))_{a \in \{-1,1\}^{k-1}}\,.
\end{split}
\end{equation}
\begin{lemma}
\label{bijection}
The mapping $\mathbf Y \mapsto (y_1, \varphi_I(\mathbf Y))$ is a bijection.
\end{lemma}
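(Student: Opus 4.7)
The plan is to construct an explicit inverse map. First I would observe that $\varphi_I(\mathbf{Y}) = (I_a(\mathbf{Y}))_{a \in \{-1,1\}^{k-1}}$ is always an ordered partition of $[N]$: for each position $j \in [N]$, the tuple $(y_1^j y_2^j, \ldots, y_1^j y_k^j) \in \{-1,1\}^{k-1}$ is a well-defined element of $\{-1,1\}^{k-1}$, and $j$ lies in $I_a$ for precisely that $a$. Thus the image of the map lies in $\Sigma_N \times \mathcal{P}_{k-1}$, where $\mathcal{P}_{k-1}$ denotes the set of ordered partitions $(I_a)_{a \in \{-1,1\}^{k-1}}$ of $[N]$.

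Next I would write down the inverse. Given any pair $(z, (I_a)_{a \in \{-1,1\}^{k-1}})$ with $z \in \Sigma_N$ and $(I_a) \in \mathcal{P}_{k-1}$, define $\Psi(z, (I_a)) := (y_1, \ldots, y_k)$ by setting $y_1 := z$ and, for $i = 2, \ldots, k$ and each $j \in [N]$,
\begin{equation*}
    y_i^j := z^j \, a_{i-1}, \qquad \text{where $a = (a_1, \ldots, a_{k-1})$ is the unique element of $\{-1,1\}^{k-1}$ with $j \in I_a$.}
\end{equation*}
This is unambiguous because $(I_a)$ is a partition, and $y_i \in \Sigma_N$ since $z^j, a_{i-1} \in \{-1,1\}$.

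Finally I would check that the two maps are mutual inverses, which is essentially a symbol-pushing exercise. Starting from $\mathbf{Y} \in \Sigma_N^k$, forming $(y_1, \varphi_I(\mathbf{Y}))$ and then applying $\Psi$ returns, in each coordinate $j$, $y_1^j \cdot (y_1^j y_i^j) = y_i^j$ (using $(y_1^j)^2 = 1$), so $\Psi \circ \varphi = \mathrm{id}$. Conversely, starting from $(z, (I_a))$ and applying $\Psi$ followed by the forward map, the resulting index set for $a$ consists of exactly those $j$ with $y_1^j y_i^j = z^j \cdot z^j a_{i-1} = a_{i-1}$ for all $i = 2, \ldots, k$, which is precisely $I_a$; hence $\varphi \circ \Psi = \mathrm{id}$. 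No nontrivial obstacle arises; the only thing to keep careful track of is that the tuple $a$ indexes the $k-1$ vectors $y_2, \ldots, y_k$ with offset $a_{i-1}$ for $y_i$, which is purely a matter of bookkeeping.
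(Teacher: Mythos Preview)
Your proposal is correct and follows essentially the same approach as the paper. The paper's proof is terser: it factors the map as $\mathbf{Y} \mapsto (y_1,(y_1\circ y_i)_{2\le i\le k}) \mapsto (y_1,\varphi_I(\mathbf{Y}))$, observing that the first arrow inverts via $y_1\circ(y_1\circ y_i)=y_i$ and the second is a bijection by definition; your explicit inverse $\Psi$ simply composes these two inverses in one formula.
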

We introduce $\varphi_I(\mathbf Y)$ for the following two reasons. On one hand, $\varphi_I(\mathbf Y)$ is invariant under sign switching (see Lemma \ref{phiI-inv} below). On the other hand, two sequences are equivalent up to some automorphisms if the cardinalities of $(I_a)_{a \in \{-1,1\}^{k-1}}$ for two sequences are the same (See Lemma \ref{number match} below).
\begin{lemma}
\label{phiI-inv}
	For any $g \in \Sigma_N$, $\varphi_I(g \circ \mathbf X) = \varphi_I(\mathbf X)$.
\end{lemma}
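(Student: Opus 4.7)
The plan is straightforward computation, exploiting the fact that the sign-switching action commutes componentwise and that each entry of $g \in \Sigma_N$ squares to $1$. Concretely, the key identity is that for any $g, x, y \in \Sigma_N$ and any $j \in [N]$, the $j$-th component of $(g \circ x) \circ (g \circ y)$ equals $g^j x^j \cdot g^j y^j = (g^j)^2 x^j y^j = x^j y^j$, so $(g \circ x) \circ (g \circ y) = x \circ y$.

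First I would write $\mathbf Y := g \circ \mathbf X = (g \circ x_1, \ldots, g \circ x_k)$, so that $y_1 = g \circ x_1$ and, for each $i \in \{2, \ldots, k\}$, $y_1 \circ y_i = (g \circ x_1) \circ (g \circ x_i)$. Applying the identity above coordinate by coordinate, the $j$-th component of $y_1 \circ y_i$ equals the $j$-th component of $x_1 \circ x_i$ for every $i$ and every $j$.

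Since $I_a(\mathbf Y)$ is defined solely in terms of these components of $y_1 \circ y_2, \ldots, y_1 \circ y_k$, this equality of components immediately yields $I_a(g \circ \mathbf X) = I_a(\mathbf X)$ for every $a \in \{-1,1\}^{k-1}$, and therefore $\varphi_I(g \circ \mathbf X) = \varphi_I(\mathbf X)$.

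There is no real obstacle: the lemma is essentially a tautology reflecting the design of $\varphi_I$, which records the pattern of $\pm 1$'s in $y_1 \circ y_i$ precisely because this pattern is the sign-switching invariant of the sequence.
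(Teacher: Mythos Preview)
Your proof is correct and follows essentially the same approach as the paper: both rest on the identity $(g\circ x_1)\circ(g\circ x_i) = x_1\circ x_i$, which follows from $(g^j)^2=1$, and then conclude directly that each $I_a$ is unchanged. The paper's version is just more terse.
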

\begin{lemma}
\label{number match}
	If  $|I_a(\mathbf X)| = |I_a(\mathbf Y)|$ for all $a \in \{-1,1\}^{k-1}$, then there exists a permutation $\sigma \in S_N$ and $g \in \Sigma_N^k$ such that $
	g \circ (\sigma \circ \mathbf X) = \mathbf Y$.
\end{lemma}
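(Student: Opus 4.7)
The plan is to prove the lemma in two stages: first absorb a permutation $\sigma \in S_N$ so that the index partitions of $\mathbf X$ and $\mathbf Y$ literally coincide, and then align values via a single global sign switch $g \in \Sigma_N$. Throughout I treat the $2^{k-1}$ sets $\{I_a(\mathbf X)\}_a$ and $\{I_a(\mathbf Y)\}_a$ as the central combinatorial data, since by definition they each partition $[N]$.

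\textbf{Stage 1 (align partitions).} By hypothesis, for each $a \in \{-1,1\}^{k-1}$ the parts $I_a(\mathbf X)$ and $I_a(\mathbf Y)$ have the same cardinality, so I can pick any permutation $\sigma \in S_N$ that restricts to a bijection $I_a(\mathbf X) \to I_a(\mathbf Y)$ for every $a$ simultaneously. A direct unfolding of the definitions using $(\sigma \circ x)^j = x^{\sigma^{-1}(j)}$ shows $I_a(\sigma \circ \mathbf X) = \sigma(I_a(\mathbf X))$, so this choice of $\sigma$ gives $I_a(\sigma \circ \mathbf X) = I_a(\mathbf Y)$ for all $a$. After replacing $\mathbf X$ by $\sigma \circ \mathbf X$, I may assume $I_a(\mathbf X) = I_a(\mathbf Y)$ for every $a$, and the remaining task is to exhibit a sign switch carrying $\mathbf X$ to $\mathbf Y$.

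\textbf{Stage 2 (align values by one sign switch).} Set $g := x_1 \circ y_1 \in \Sigma_N$, i.e.\ $g_j = (x_1)_j (y_1)_j$. Then $g \circ y_1 = x_1$ automatically. For $i \geq 2$ and any $j \in [N]$, since $j$ sits in the same block $I_a$ for both sequences, I get $(x_1)_j (x_i)_j = a_{i-1} = (y_1)_j (y_i)_j$. Multiplying through by $(y_1)_j$ rearranges this to $(x_i)_j = g_j (y_i)_j$, i.e.\ $g \circ y_i = x_i$. Hence $g \circ \mathbf Y = \mathbf X$, and since $g$ is an involution on $\Sigma_N$, also $g \circ \mathbf X = \mathbf Y$. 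Note this is slightly stronger than the statement: a single $g \in \Sigma_N$ suffices, rather than a $k$-tuple in $\Sigma_N^k$.

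\textbf{Anticipated obstacle.} There is no serious obstacle; the proof is essentially bookkeeping. The one place where care is needed is verifying $I_a(\sigma \circ \mathbf X) = \sigma(I_a(\mathbf X))$, i.e.\ that the permutation action on sequences intertwines with the intended set-theoretic action on the index partition. Once that compatibility is in place, Stage 2 is immediate because pinning the first coordinate via $g = x_1 \circ y_1$ simultaneously pins all other coordinates, thanks precisely to the data recorded by $\varphi_I$.
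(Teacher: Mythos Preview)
Your proof is correct and follows essentially the same approach as the paper: pick $\sigma$ so that $\sigma(I_a(\mathbf X)) = I_a(\mathbf Y)$ for all $a$, then set $g = (\sigma\circ x_1)\circ y_1$ and check $g\circ(\sigma\circ\mathbf X)=\mathbf Y$. The only cosmetic difference is that the paper phrases Stage~2 via Lemma~\ref{phiI-inv} and the bijection in Lemma~\ref{bijection}, whereas you do the equivalent coordinate-by-coordinate verification directly; your remark that a single $g\in\Sigma_N$ (rather than a $k$-tuple) suffices is also what the paper's proof actually establishes.
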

The proofs of the lemmas in this subsection are postponed to Section \ref{sec:proof-auto}.

\subsection{Converting random vectors to a boosting set}
\label{sec:map}
\subsubsection{Admissible sequences}
By the law of large numbers, we expect that most $\mathbf X \in \Sigma_N^k$ satisfy $|I_a(\mathbf X)| \approx 2^{-(k-1)}N$ for all $a \in \{-1,1\}^{k-1}$. We will call such $\mathbf X$'s admissible. Since we have plenty of $(1 - \delta)$-boosting sets, some of them must be admissible.

\begin{defn}
Let $C_2>0$ be a constant to be determined in Lemma \ref{typical prob}. A sequence of vectors $\mathbf X = (x_1,x_2,\dots,x_k) \in \Sigma_N^k$ is said to be admissible if for any $a \in \{-1,1\}^{k-1}$,
\begin{equation}
	\big||I_a(\mathbf X)| - 2^{-(k-1)}N\big| \leq C_2 (N\log N)^{1/2}.
\end{equation}
\end{defn}

\begin{lemma}
\label{typical prob}Let $1 \leq k \leq \log N/2$ and $Y_1, \dots Y_k$ be i.i.d. uniform random vectors in $\Sigma_N$ defined on a probability space $(\Omega,\mu)$. Then there exists an absolute constant $C_2>0$ such that for $N$ sufficiently large depending on $C_2$,
	\begin{equation}
	\label{eq:1007}
		\mu(Y_1, \dots Y_k \text{ is not admissible}) \leq N^{-2^k}\,.
	\end{equation}	
\end{lemma}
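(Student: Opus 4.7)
The plan is to reduce the admissibility condition for $\mathbf Y = (Y_1,\dots,Y_k)$ to a concentration estimate for $2^{k-1}$ binomial counts, and then exploit the smallness of the success probability via a Bernstein-type bound.

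First I would observe that for each coordinate $j \in [N]$, the tuple
\begin{equation*}
Z_j := (Y_1^j Y_2^j,\, Y_1^j Y_3^j,\, \dots,\, Y_1^j Y_k^j)
\end{equation*}
is uniformly distributed on $\{-1,1\}^{k-1}$: indeed the map $(Y_1^j,Y_2^j,\dots,Y_k^j) \mapsto (Y_1^j, Z_j)$ is a bijection of $\{-1,1\}^k$ to itself, and the domain is uniform by assumption. Moreover, different $Z_j$'s depend on disjoint blocks of independent coordinates, so they are mutually independent across $j$. Hence for each fixed $a \in \{-1,1\}^{k-1}$,
\begin{equation*}
|I_a(\mathbf Y)| = \sum_{j=1}^N \mathbf{1}\{Z_j = a\} \sim \mathrm{Bin}(N,\, 2^{-(k-1)}),
\end{equation*}
with mean $Np = N 2^{-(k-1)}$ and variance $Np(1-p) \leq N 2^{-(k-1)}$.

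Next I would apply Bernstein's inequality. With $t = C_2 (N\log N)^{1/2}$, this gives
\begin{equation*}
\mu\bigl(\bigl||I_a(\mathbf Y)| - Np\bigr| > t\bigr) \leq 2 \exp\!\left(-\frac{t^2}{2Np(1-p) + 2t/3}\right).
\end{equation*}
The assumption $k \leq \tfrac{1}{2}\log N$ ensures $Np \geq 2(N/\log N)^{1/2}$, which for $N$ large dominates $t/3 \asymp (N\log N)^{1/2}$; hence the denominator is at most $3Np$, and we obtain
\begin{equation*}
\mu\bigl(\bigl||I_a(\mathbf Y)| - Np\bigr| > t\bigr) \leq 2\exp(-t^2/(3Np)) = 2 N^{-C_2^2 \cdot 2^{k-1}/3}.
\end{equation*}
A union bound over the $2^{k-1}$ tuples $a$ then yields
\begin{equation*}
\mu(\mathbf Y \text{ is not admissible}) \leq 2^k \cdot N^{-C_2^2 \cdot 2^{k-1}/3}.
\end{equation*}
Choosing $C_2$ to be an absolute constant large enough that $C_2^2/6 \geq 3$ (e.g.\ $C_2 = 5$), the exponent is at least $3 \cdot 2^k$, which absorbs the prefactor $2^k$ and delivers the desired bound $\leq N^{-2^k}$ once $N$ is sufficiently large.

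The main obstacle is subtle but important: a naive Hoeffding bound yields only $2\exp(-2t^2/N) = 2 N^{-2C_2^2}$, a bound independent of $k$, which already fails once $2^k$ exceeds an absolute constant. The crux is therefore to exploit that $\mathrm{Var}(|I_a(\mathbf Y)|) = Np(1-p) \ll N$, so that a variance-sensitive (Bernstein) form of Chernoff is available. The hypothesis $k \leq \tfrac{1}{2}\log N$ is precisely the regime in which the Gaussian part of Bernstein's inequality dominates the subexponential tail, which is what produces the $k$-dependence needed to beat $N^{-2^k}$.
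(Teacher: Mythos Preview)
Your approach is essentially the paper's: reduce to i.i.d.\ Bernoulli indicators with success probability $2^{-(k-1)}$, apply a variance-sensitive Chernoff/Bernstein bound to pick up the factor $2^{k-1}$ in the exponent, and union-bound over $a$. The paper records the key consequence of $k\le \tfrac12\log N$ as $2^{-(k-1)}N \ge N^{1-(\log 2)/2}\ge N^{0.51}$.

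One slip to fix: your stated lower bound $Np \ge 2(N/\log N)^{1/2}$ does \emph{not} dominate $t/3\asymp (N\log N)^{1/2}$; the ratio goes the wrong way by a factor $\log N$, so as written the denominator in Bernstein is not controlled. What actually holds under $k\le \tfrac12\log N$ is the much stronger $Np = N\cdot 2^{-(k-1)} \ge N^{0.51}$, which does dominate $t$; with that correction your argument goes through unchanged.
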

\begin{proof}
First note that for any fixed $a \in \{-1,1\}^{k-1}$ the events
\begin{equation*}
	\{(Y^j_1Y^j_2,\dots,Y^j_1Y^j_k) = a\}
\end{equation*}
for $j = 1,...,N$ are independent and each event has probability $2^{-(k-1)}$. Then it follows from Chernoff's inequality (see e.g. \cite[Exercise 2.3.5]{vershynin2018high}) and the condition $k \leq \log N/2$ (which implies $2^{-(k-1)}N \geq N^{1-\log 2/2} \geq N^{0.51}$) that for any $C_2>0$ and $N$ sufficiently large depending on $C_2$,
\begin{equation}
	\mu\Big (| \sum_{j = 1}^{N}\11_{\{(Y^j_1Y^j_2,\dots,Y^j_1Y^j_k) = a\}} -2^{-(k-1)}N| \geq C_2(N\log N)^{1/2}\Big) \leq 2 N^{-c2^{k-1} C_2}\,,
\end{equation}
where $c$ is an absolute constant. Taking a union bound over all $a \in \{-1,1\}^{k-1}$ with a sufficiently large $C_2$ yields \eqref{eq:1007}.
\end{proof}
\begin{cor}
\label{typical boosting}
For any $\delta \in (0,1)$, there exists an admissible $(1 - \delta)$-boosting set of cardinality $k$ that satisfies \eqref{eq:size k}.
\end{cor}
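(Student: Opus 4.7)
The plan is a first-moment / counting argument: show that the number of $k$-sequences in $\Sigma_N^k$ forming $(1-\delta)$-boosting sets is large enough to force an overlap with the set of admissible sequences, which by Lemma \ref{typical prob} is overwhelmingly typical. Throughout, take $k$ to be the right-hand side of \eqref{eq:size k}, and let $\mu$ denote the uniform measure on $\Sigma_N^k$.

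The first task is to lower-bound $|\mathcal{B}_k|$, the collection of $(1-\delta)$-boosting $k$-subsets of $\Sigma_N$. Lemma \ref{SHT} only provides a boosting set of size \emph{at most} $k$ inside $\{x:\omega_x=1\}$, so I would first upgrade this to size exactly $k$ using two observations: any superset of a $(1-\delta)$-boosting set is itself $(1-\delta)$-boosting, and by Lemmas \ref{pstar - ub} and \ref{ublb-kappa} combined with Chernoff, $|\{x:\omega_x=1\}| \geq c_3 N/2 \gg k$ with probability $1 - o(1)$, leaving ample room to pad. This yields
\begin{equation*}
\P_{p_*}\big(\{x:\omega_x=1\}\supset S \text{ for some } S\in\mathcal{B}_k\big)\geq \epsilon_N/3,
\end{equation*}
and a union bound then gives $\epsilon_N/3 \leq |\mathcal{B}_k| p_*^k$. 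Using $p_* \leq 2C_1 N 2^{-N}$, this produces $|\mathcal{B}_k| \geq \epsilon_N \cdot 2^{Nk}/(3(2C_1 N)^k)$.

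Each $S \in \mathcal{B}_k$ lifts to $k!$ ordered sequences in $\Sigma_N^k$ with the same underlying set, all of which remain boosting, so the corresponding set $\tilde{\mathcal{B}}_k$ satisfies $\mu(\tilde{\mathcal{B}}_k) \geq k!\,\epsilon_N/(3(2C_1 N)^k)$. The bound $\epsilon_N \geq (\log\log N)^{-1/10}$ implies $k \leq \exp(O((\log\log N)^{2/5})) \leq \log N/2$ for large $N$, so Lemma \ref{typical prob} applies and gives $\mu(A^c) \leq N^{-2^k}$ for the admissible set $A$. For any $k \geq 2$, the doubly-exponential factor $2^k$ in the admissibility tail comfortably dominates the polynomial factor $N^k$ from the boosting count, since $(2^k - k)\log N$ dwarfs $O(k\log k + |\log\epsilon_N|)$. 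Hence $\mu(\tilde{\mathcal{B}}_k) > \mu(A^c)$, forcing $A \cap \tilde{\mathcal{B}}_k \neq \varnothing$ and producing the desired admissible boosting $k$-set. The degenerate case $k = 1$ is automatic because $|I_\emptyset| \equiv N$.

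The main technical step is the upgrade of Lemma \ref{SHT} from ``size $\leq k$'' to ``size exactly $k$'' in a way that keeps the union bound clean; this rests on superset-monotonicity of the boosting property and the Chernoff-type estimate on $|\{x:\omega_x=1\}|$. The remaining numerical comparison between the $N^{-2^k}$ admissibility tail and the $\sim N^{-k}$ boosting fraction is asymptotically generous for every $k \geq 2$.
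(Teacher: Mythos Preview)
Your proof is correct, but it takes a different route from the paper's. The paper argues \emph{inside the random set} $\{x:\omega_x=1\}$: it computes, for each $m\ge 1$, the $\P_{p_*}$-expected number of non-admissible $m$-sequences with distinct entries drawn from $\{x:\omega_x=1\}$, bounds it by $(p_*2^N)^m N^{-2^m}\le (2C_1)^m N^{-2^m+m}$, sums over $m$ to get $1-N^{-1/2}$ probability that \emph{every} sequence from $\{x:\omega_x=1\}$ is admissible, and then simply intersects this event with the $\epsilon_N/2$-event of Lemma~\ref{SHT}. In particular the paper never needs to pad to an exact size $k$, never needs to count boosting sets, and never compares two densities on $\Sigma_N^k$.

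Your route is a deterministic pigeonhole on $\Sigma_N^k$: you extract from Lemma~\ref{SHT} via a union bound a \emph{lower bound on the number} of boosting $k$-sets, then compare it against the $N^{-2^k}$ upper bound on non-admissible sequences. This works and even yields a quantitative count of boosting sets, at the cost of two extra steps the paper avoids: the padding argument (superset monotonicity plus the Chernoff lower bound $|\{x:\omega_x=1\}|\ge c_3N/2$) and the numerical comparison $k!\,\epsilon_N/(2C_1N)^k \gg N^{-2^k}$. Both steps are fine here because, for $\delta$ fixed and $\epsilon_N\ge(\log\log N)^{-1/10}$, one has $k\le \exp(O((\log\log N)^{2/5}))\ll \log N/2\ll N$; just note that the corollary as stated is really ``for any fixed $\delta$ and $N$ large depending on $\delta$,'' which both your argument and the paper's implicitly use.
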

\begin{proof}
For any $m \geq 1$, we have that
\begin{equation}
\label{eq:241352}
\begin{split}
	&\E_{p_*} \Big[ \Big| \Big\{(x_1,\dots,x_m) \in \Sigma_N^m : \substack{\omega_{x_i} =1, x_i \not = x_j \text{ for }i \not =j,\\ (x_1,\dots,x_m) \text{ not admissible}} \Big\}\Big|\Big]\\
	 &\leq p_*^m \times \Big| \Big\{(x_1,\dots,x_m) \in \Sigma_N^m :  (x_1,\dots,x_m) \text{ not admissible}\Big\}\Big|\leq (p_* 2^N)^m N^{-2^m}\,,
\end{split}
\end{equation}
where we used Lemma \ref{typical prob} in the last step. Note that combining Lemma \ref{pstar - ub} and Lemma \ref{ublb-kappa} implies 
\begin{equation}
\label{eq:useub}
	(p_* 2^N)^m N^{-2^m} \leq (2p_N(\epsilon_N)\cdot 2^N)^m N^{-2^m} \leq  (2C_1)^mN^{-2^m+m}\,.
\end{equation}
 Hence summing \eqref{eq:241352} over $m \geq 1$ yields that the probability that all subset of $\{x : \omega_x = 1\}$ are admissible is at least $1 -  N^{-1/2}$. Combined with Lemma \ref{SHT} and the condition $\epsilon_N \geq (\log\log N)^{-\cq}$, this yields the desired result. 
\end{proof}
\subsubsection{Convert gently}
Being admissible means that $|I_a|$'s are almost fixed. Recall that Lemma \ref{number match} says two sequences are equivalent up to some automorphisms if $(|I_a|)_{a \in \{-1,1\}^{k-1}}$ of two sequences are the same. Hence, we expect that we can convert one admissible sequence to another by first applying a gentle mapping, which perturbs each vector only by a little bit, and then applying some automorphisms.

We will also require the gentle mappings to satisfy certain symmetric property \eqref{eq:f-symmetric}, so that vaguely speaking it does not favor any particular direction.
\begin{defn}
A mapping $f=(f_1,f_2,\dots,f_k)$ where $f_i :\Sigma_N^k \to \Sigma_N$ for $1 \leq i \leq k$ is said to be gentle if for any $g \in \Sigma_N$ and $\mathbf Y = (y_1,\dots,y_k)\in \Sigma_N^k$,
\begin{equation}
\label{eq:f-symmetric}
	f(g \circ \mathbf Y) =g \circ f(\mathbf Y)\,,
\end{equation}
and for all $1 \leq i \leq k$,
\begin{equation}
\label{eq:f-close}
	\mathrm{dist}(y_i,f_i(\mathbf Y)) \leq 2^kC_2(N \log N)^{1/2}\,,
\end{equation}
where 
\begin{equation}
\label{eq:def-HMdist}
	\mathrm{dist}(u,v) := |\{i \in [N]: u_i \not = v_i\}|\,.
\end{equation}
\end{defn}
\begin{lemma}
\label{adjust}
Let $k \geq 1$ and $\mathbf X = (x_1,x_2,\dots,x_k) \in \Sigma_N^k$ be admissible. Then there exists a gentle mapping $f=(f_1,f_2,\dots,f_k)$ from $\Sigma_N^k$ to $\Sigma_N^k$ such that for any admissible $\mathbf Y \in \Sigma_N^k$,  there exists a permutation $\sigma \in S_N$ and $g \in \{-1,1\}^N$ such that $g \circ(\sigma \circ f(\mathbf Y)) = \mathbf X$.
\end{lemma}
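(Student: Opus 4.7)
\medskip

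\noindent\textbf{Proof proposal.} The plan is to build a gentle map $f$ that rearranges $\mathbf Y$ so that $|I_a(f(\mathbf Y))| = |I_a(\mathbf X)|$ for every $a \in \{-1,1\}^{k-1}$; the conclusion then follows immediately from Lemma~\ref{number match}. The core observation is that admissibility forces $(|I_a(\mathbf Y)|)_a$ and $(|I_a(\mathbf X)|)_a$ to differ only by $O((N\log N)^{1/2})$ in each coordinate, so only a small number of indices need to be reassigned among the sets $I_a$.

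Fix admissible $\mathbf X$. Given admissible $\mathbf Y$, set $d_a := |I_a(\mathbf Y)| - |I_a(\mathbf X)|$; admissibility gives $|d_a| \leq 2C_2(N\log N)^{1/2}$ and $\sum_a d_a = 0$. Using any fixed total order on $\{-1,1\}^{k-1}$, I would construct a deterministic transportation plan $(t_{a,a'})$ of nonnegative integers satisfying $\sum_{a'}t_{a,a'} - \sum_{a'}t_{a',a} = d_a$ and of total mass at most $\tfrac12\sum_a|d_a| \leq 2^{k-1}C_2(N\log N)^{1/2}$ (a standard greedy routing between the ``surplus'' atoms $d_a>0$ and the ``deficit'' atoms $d_a<0$ works). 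For each $a$ with $d_a > 0$ I then choose the $d_a$ lexicographically smallest elements of $I_a(\mathbf Y)$ and partition them into blocks of sizes $t_{a,a'}$, one for each $a'$.

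Define $f(\mathbf Y)$ by setting $f_1(\mathbf Y) = y_1$ and, for each index $j$ assigned to the block $(a,a')$, flipping the $j$-th coordinate of $y_i$ for every $i \in \{2,\dots,k\}$ with $a_{i-1}\ne a'_{i-1}$; all other coordinates are left untouched. Then the $j$-th component of $y_1 \circ y_i$ changes from $a_{i-1}$ to $a'_{i-1}$, so $j$ moves from $I_a$ to $I_{a'}$ in $f(\mathbf Y)$, and the cardinalities become $|I_a(f(\mathbf Y))| = |I_a(\mathbf Y)| - d_a = |I_a(\mathbf X)|$ as desired. Each coordinate of each $y_i$ ($i\ge 2$) is flipped at most once, bounded by the total mass of the plan, so $\mathrm{dist}(y_i, f_i(\mathbf Y)) \leq 2^{k-1}C_2(N\log N)^{1/2} \leq 2^kC_2(N\log N)^{1/2}$, verifying \eqref{eq:f-close}. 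For the equivariance \eqref{eq:f-symmetric}, Lemma~\ref{phiI-inv} gives $I_a(g \circ \mathbf Y) = I_a(\mathbf Y)$ for any $g \in \Sigma_N$, so the transportation plan and the selected indices depend only on the $I_a$'s and hence are the same for $\mathbf Y$ and $g \circ \mathbf Y$; since flipping the $j$-th coordinate commutes with the componentwise multiplication by $g$, we obtain $f(g \circ \mathbf Y) = g \circ f(\mathbf Y)$.

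The main thing to keep careful about is the interplay between the three requirements: the cardinalities $|I_a(f(\mathbf Y))|$ must match those of $\mathbf X$ exactly, the per-vector flip budget must stay within $2^kC_2(N\log N)^{1/2}$, and the construction must depend on $\mathbf Y$ only through the sign-invariant data $\varphi_I(\mathbf Y)$. Once the deterministic transportation plan and lexicographic index-selection rule are fixed, all three follow from a single bookkeeping argument; no other subtle point remains, and Lemma~\ref{number match} closes the proof.
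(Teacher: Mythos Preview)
Your proposal is correct and follows essentially the same route as the paper: set $f_1(\mathbf Y)=y_1$, perturb the partition $\varphi_I(\mathbf Y)$ by moving $O((N\log N)^{1/2})$ indices between cells so that the cell sizes match those of $\mathbf X$, use that this construction factors through $\varphi_I$ (Lemma~\ref{phiI-inv}) to obtain the equivariance \eqref{eq:f-symmetric}, and then invoke Lemma~\ref{number match}. One small omission: a gentle map must be defined on \emph{all} of $\Sigma_N^k$, so you should also declare $f(\mathbf Z)=\mathbf Z$ for non-admissible $\mathbf Z$ (as the paper does), which is compatible with \eqref{eq:f-symmetric} and \eqref{eq:f-close} because admissibility itself depends only on $\varphi_I$.
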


Inspired by previous lemma and Lemma \ref{typical prob}, Corollary \ref{typical boosting}, we hope that $k$ uniformly random vectors in $\Sigma_N$ can effectively remove points in $\bigcap_{x : \omega_x = 1} H(x)$, because $k$ uniformly random vectors are very close to a $(1 - \delta)$-boosting set, which can remove all points in $\bigcap_{x : \omega_x = 1} H(x)$ with probability $1 - \delta$. The following lemma describes how much such removing ability can be inherited from a gentle mapping.
\begin{lemma}
\label{gentlemap}
	Let $1 \leq k \leq \log N/2$, $f$ be a gentle mapping, and $(Y_i)_{i \geq 1}$ be a sequence of i.i.d. uniform random vectors in $\Sigma_N$ defined on a probability space $(\Omega,\mu)$. Denote $\mathbf Y := (Y_1, \dots Y_k)$. For any $A \subset \{-1,1\}^N$, define
	\begin{equation}
\label{eq:def-qa}
	 	q(A) := \mu\Big( A \cap \bigcap_{i = 1}^k H(f_i(\mathbf Y))  = \varnothing\Big) \,.
	 \end{equation}
If $q(A) \geq (\log N)^{-1/3}$, then
	\begin{equation}
	\label{eq:64732}
		\mu\Big(A \cap \bigcap_{i = 1}^{kN_*} H(Y_i) = \varnothing\Big) \geq 1 - \exp(-c q(A) N_*)\,,
	\end{equation}
where $c>0$ is an absolute constant and 
\begin{equation}
	N_* := \lfloor N/ (\log N)^{1/2} \rfloor\,.
\end{equation}
\end{lemma}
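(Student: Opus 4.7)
The plan is to partition the $kN_*$ vectors into $N_*$ independent groups of size $k$, obtain a per-group comparison, and combine via independence. Set $\mathbf Y^{(j)} := (Y_{(j-1)k+1}, \ldots, Y_{jk})$ for $j = 1,\ldots,N_*$; these groups are i.i.d. Since $\bigcap_{i=1}^{kN_*} H(Y_i) \subseteq \bigcap_{i=1}^k H(Y^{(j)}_i)$ for every $j$, independence of the $\mathbf Y^{(j)}$ yields
\[
\mu\Big(A \cap \bigcap_{i=1}^{kN_*} H(Y_i) \neq \varnothing\Big) \;\le\; \Big[\mu\Big(A \cap \bigcap_{i=1}^k H(Y^{(1)}_i) \neq \varnothing\Big)\Big]^{N_*}.
\]
Thus it suffices to prove the per-group lower bound
\[
\mu\Big(A \cap \bigcap_{i=1}^k H(Y^{(1)}_i) = \varnothing\Big) \;\ge\; c \cdot q(A) \qquad (\star)
\]
for an absolute $c > 0$; substituting and using $1-x \le e^{-x}$ gives the claimed $\exp(-c\, q(A)\, N_*)$ decay.

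To prove $(\star)$, I would exploit the symmetry $f(g \circ \mathbf Y) = g \circ f(\mathbf Y)$ by conditioning on $\mathcal{I} := \varphi_I(\mathbf Y^{(1)})$. By Lemma \ref{bijection}, once $\mathcal I$ is fixed the vector $\mathbf Y^{(1)}$ is determined by $g := Y^{(1)}_1$, and in fact $\mathbf Y^{(1)} = g \circ \mathbf Y_0$ where $\mathbf Y_0$ is the canonical representative of $\mathcal I$ with first coordinate $\mathbf 1$; conditional on $\mathcal I$, $g$ is uniform on $\Sigma_N$. The symmetry of $f$ then gives $f(\mathbf Y^{(1)}) = g \circ f(\mathbf Y_0)$, and combining with Lemma \ref{autoH} the two events of interest rewrite as
\[
\{g \circ A \cap B_{\mathcal I} = \varnothing\} \quad\text{and}\quad \{g \circ A \cap B'_{\mathcal I} = \varnothing\},
\]
where $B_{\mathcal I} := \bigcap_i H(f_i(\mathbf Y_0))$ and $B'_{\mathcal I} := \bigcap_i H((\mathbf Y_0)_i)$ are fixed subsets of $\Sigma_N$ given $\mathcal I$. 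The per-group bound $(\star)$ thus reduces to showing that for every $\mathcal I$, a uniform sign flip of $A$ avoids $B'_{\mathcal I}$ with probability at least a constant multiple of its probability of avoiding $B_{\mathcal I}$.

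The main obstacle is the discrepancy between $B_{\mathcal I}$ and $B'_{\mathcal I}$: their defining centers $f_i(\mathbf Y_0)$ and $(\mathbf Y_0)_i$ differ in Hamming distance by at most $2^k C_2 (N \log N)^{1/2}$, which exceeds the boundary scale $\sqrt N$, so the two sets can disagree on a nontrivial fraction of $\Sigma_N$ and no pointwise containment is available. I expect the comparison to proceed by a concentration estimate: for a uniform sign flip $g$ and a fixed $a \in A$, the inner products $(\mathbf Y_0)_i \cdot (g \circ a)$ concentrate at scale $\sqrt N$, so $g \circ a$ lies in the transition region $\{y : |(\mathbf Y_0)_i \cdot y - \kappa \sqrt N| \le O(2^k (N \log N)^{1/2})\}$ with only small probability per element of $A$. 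The hypothesis $q(A) \ge (\log N)^{-1/3}$ should then absorb this transition error into the constant $c$, excluding the degenerate scenario in which $A$ is concentrated on the thin transition shells where $H(f_i(\mathbf Y_0))$ and $H((\mathbf Y_0)_i)$ actually disagree.
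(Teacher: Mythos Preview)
Your reduction to the per-group bound $(\star)$ is clean and the first displayed inequality is correct. The gap is in the proof of $(\star)$ itself. After your conditioning, the task becomes comparing $\mu_g(g\circ A\cap B_{\mathcal I}=\varnothing)$ with $\mu_g(g\circ A\cap B'_{\mathcal I}=\varnothing)$; the discrepancy is bounded by
\[
\mu_g\big(g\circ A\cap (B_{\mathcal I}\triangle B'_{\mathcal I})\neq\varnothing\big)
=\mu_g\big(\exists\,a\in A:\ g\circ a\in B_{\mathcal I}\triangle B'_{\mathcal I}\big).
\]
For a \emph{single} $a$ this equals $|B_{\mathcal I}\triangle B'_{\mathcal I}|/2^N$, which Lemma~\ref{angle} bounds by roughly $N^{-1/25}$. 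But you need the existential statement over all of $A$, and nothing in the hypothesis $q(A)\ge(\log N)^{-1/3}$ controls $|A|$: if $|A|$ is, say, exponential in $N$, the transition probability can be $1-o(1)$ and the comparison collapses. Your final paragraph asserts that $q(A)\ge(\log N)^{-1/3}$ ``absorbs'' this error, but the error you must absorb is not the per-point $N^{-1/25}$ --- it is an uncontrolled union over $A$.

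The paper's proof avoids a per-group comparison entirely. It partitions into $N_*$ groups as you do, but then argues pointwise in $\Sigma_N$ rather than groupwise: by Chernoff one first isolates $m\ge\lfloor q(A)N_*/2\rfloor$ groups with $A\cap\bigcap_jH(f_j(\mathbf Y^{(i)}))=\varnothing$; any surviving $z\in A_0:=A\cap\bigcap_{i,j}H(Y^{(i)}_j)$ must then lie in the difference region $\bigcup_j\big(H(Y^{(i)}_j)\setminus H(f_j(\mathbf Y^{(i)}))\big)$ for \emph{each} of these $m$ groups. Since for a fixed $z$ the per-group probability of this is $\le N^{-1/25}$ (this is where the symmetry \eqref{eq:f-symmetric} and Lemma~\ref{angle} enter), a large-deviations bound across the $N_*$ independent groups gives probability $\exp(-c\,q(A)\,N_*\log N)$, small enough to beat $2^N$ in a union bound over $z\in\Sigma_N$. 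In short, the paper takes the intersection over many independent groups \emph{before} the union over points of $A$; your product decomposition forces the opposite order, and this is exactly why $(\star)$ cannot be closed with the available estimates.
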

We postpone the proofs of Lemmas \ref{adjust} and \ref{gentlemap} to Section \ref{sec:proof-map}, and provide the proof of Theorem \ref{thm:perceptron} in the next section assuming these lemmas.
\section{Proof of Theorem \ref{thm:perceptron}}
Set $\delta := \epsilon_N^2/36$. Let $\mathbf X = (x_1,x_2,\dots,x_k) \in \Sigma_N^k$ be a fixed admissible $(1 - \delta)$-boosting set as in Corollary \ref{typical boosting}, where
\begin{equation}
	 k \leq \exp(10^{15}(\delta \epsilon_N)^{-2} (\log\log N)^{1/5} ) \leq \exp((\log\log N)^{9/10} ).
\end{equation}
Let $f$ be the gentle mapping corresponding to $\mathbf X$ as in Lemma \ref{adjust}. Let $(Y_i)_{i \geq 1}$ be a sequence of i.i.d. uniform random vectors in $\Sigma_N$ defined on a probability space $(\Omega,\mu)$  and denote $\mathbf Y := (Y_1, \dots Y_k)$.  Define permutation $\sigma_{Y} \in S_N$ and $g_{Y} \in \{-1,1\}^N$ depending on $(\mathbf {X,Y}, f)$ such that $g_{Y} \circ(\sigma_{Y} \circ f(\mathbf Y)) = \mathbf X$ for all admissible $\mathbf Y$. 

We first prove that for typical $\bigcap_{x: \omega_x = 1} H(x)$, the probability that it has a non-empty intersection with the random set $ \bigcap_{i = 1}^k H(f_i(\mathbf Y))$ is negligible, more precisely,\begin{equation}
\label{eq:cond omega}
	\P_{p_*}\Big[q\Big(\bigcap_{x: \omega_x = 1} H(x)\Big) \leq \delta^{1/2}\Big] \leq 2\delta^{1/2}\,.
\end{equation}
To this end, we first note that by Lemma \ref{autoH}, for any $A \subset \Sigma_N$,
\begin{equation*}
	q(A) = \mu\Big( A \cap \bigcap_{i = 1}^k H(f_i(\mathbf Y)) = \varnothing\Big) \geq \mu\Big( \big(g_{Y}\circ (\sigma_{Y} \circ A) \big)\cap \bigcap_{i = 1}^k H(x_i) = \varnothing \Big) - \mu(\mathbf Y \text{ not admissible})\,.
\end{equation*}
Substitute $A = \bigcap_{x: \omega_x = 1} H(x)$ and take expectation. By Lemma \ref{typical prob}, we have
\begin{equation}
	\E_{p_*}\Big[q\Big(\bigcap_{x: \omega_x = 1} H(x)\Big)\Big] \geq \mu \otimes \P_{p_*}\Big( \bigcap_{x: \omega_x = 1} H(g_{Y}\circ (\sigma_{Y} \circ x))\cap \bigcap_{i = 1}^k H(x_i)  = \varnothing\Big) - N^{-2^k}\,.
\end{equation}
Note that $(\mathbf X,f)$ is fixed, $\mathbf Y$ and $\omega_x$'s are independent. We have that  $\{g_{Y}\circ (\sigma_{Y} \circ x):\omega_x = 1\}$ and $\{x:\omega_x = 1\}$ have the same distribution. Together with the facts that $\{x_1,\dots,x_k\}$ is a $(1 - \delta)$-boosting set and $\delta = \epsilon_N^2/36  \geq (\log\log N)^{-1/4}$, this yields
\begin{equation}
	\E_{p_*}\Big[q\Big(\bigcap_{x: \omega_x = 1} H(x)\Big)\Big]  \geq \mu \otimes\P_{p_*}\Big( \bigcap_{x: \omega_x = 1} H(x)\cap \bigcap_{i = 1}^k H(x_i)  = \varnothing\Big)  - N^{-2^k}\geq 1 - 2\delta\,.
\end{equation}
Hence \eqref{eq:cond omega} follows.

Next, recall that $N_* = \lfloor N/ (\log N)^{1/2} \rfloor$. Combining Lemma \ref{gentlemap} and \eqref{eq:cond omega} yields that
\begin{equation}
\label{eq:kNmore}
	\mu \otimes \P_{p_*}\Big(\bigcap_{x: \omega_x = 1} H(x) \cap \bigcap_{j = 1}^{kN_*} H( Y_j)  = \varnothing\Big)\geq 1-  2\delta^{1/2} - \exp(-c \delta^{1/2} N_*) \geq 1 - 3\delta^{1/2} = 1 - \epsilon_N/2\,.
\end{equation}
This tells us that the intersection will be empty with high probability after adding $kN_*$ more random half spaces. In fact, increasing $p_*$ should have a similar effect as adding more random half spaces. Indeed, set
\begin{equation}
	\Delta: =  (\log \log N)^{-\cq}p_N(\epsilon_N) /2\,.
\end{equation}
We claim that \eqref{eq:kNmore} implies 
\begin{equation}
\label{eq:0153}
\P_{p_* + \Delta}\Big(\bigcap_{x: \omega_x = 1} H(x) = \varnothing\Big)  \geq 1 - \epsilon_N\,,
\end{equation}
which together with Lemma \ref{pstar - ub} yields \eqref{eq:thm}.

To prove \eqref{eq:0153}, we let $\omega'_x$ for $x \in \Sigma_N$ be i.i.d independent Bernoulli random variables coupled with $(\omega_x)_{x\in \Sigma_N}$ such that $$\P_{p_*}(\omega'_x = 1) = p_* + \Delta, \quad \omega_x  \leq \omega'_x, \quad (\omega'_x,\omega_x)\text{, $x \in \Sigma_N$ are mutually  independent}\,.$$ Then given $\big(|\{x:\omega_x = 0,\omega'_x = 1\}|, \{x: \omega_x = 1\}\big)$, the conditional distribution of $\{x: \omega_x = 0,\omega'_x = 1\}$ is the uniform distribution on all subset of $\{x: \omega_x = 0\}$ of cardinality $|\{x:\omega_x = 0,\omega'_x = 1\}|$, which, ordered by subset inclusion, stochastically dominates $A \setminus \{x: \omega_x  = 1\}$, where $A$ is a random set that consists of $|\{x:\omega_x = 0,\omega'_x = 1\}|$ independent uniform random vectors in $\Sigma_N$.
Therefore
\begin{equation}
\label{eq:com123}
\begin{split}
	\P_{p_* + \Delta}\Big(\bigcap_{x: \omega_x = 1} H(x) = \varnothing\Big) \geq &~\mu \otimes \P_{p_*}\Big(\bigcap_{x: \omega_x = 1} H(x) \cap \bigcap_{j = 1}^{kN_*} H( Y_j)  = \varnothing\Big)\\
	& - \P_{p_*}\big(|\{x:\omega_x = 0,\omega'_x = 1\}| < k N_* \big) \,.
\end{split}	
\end{equation}
In addition, note that Lemma \ref{ublb-kappa} implies 
\begin{equation}
\label{eq:uselb}
	\Delta \cdot 2^N \geq c_3 (\log \log n)^{1/10}N/2> 2 k N_*\,.
\end{equation}
Hence Chernoff's inequality (see e.g. \cite[Exercise 2.3.5]{vershynin2018high}) yields
\begin{equation}
	\P_{p_*}(|\{x:\omega_x = 0,\omega'_x = 1\}| < k N_*) \leq e^{-c\Delta 2^N} \leq e^{-c(\log \log N)^{-\cq}N} \leq \epsilon_N/2\,,
\end{equation}
which together with \eqref{eq:com123} and \eqref{eq:kNmore} yields \eqref{eq:0153}, and then Theorem \ref{thm:perceptron} follows.\qed

\section{Properties of the automorphisms}
\label{sec:proof-auto}

\begin{proof}[\bf Proof of Lemma \ref{autoH}]For any $x,y \in \Sigma_N$ and $g \in \Sigma_N$, we have $g = g^{-1}$, hence
	\begin{equation}
		y \cdot (g \circ x) = \sum_{i = 1}^N y^ig^ix^i = (g\circ y)\cdot x = (g^{-1}\circ y)\cdot x\,.
	\end{equation}
Then 
\begin{equation}
\begin{split}
		H(g\circ x) &= \{y \in \Sigma_N : y \cdot (g \circ x) \geq \kappa \sqrt{N}\} = \{y \in \Sigma_N : (g^{-1}\circ y)\cdot x \geq \kappa \sqrt{N}\} \\
		&=  \{(g\circ y) \in \Sigma_N : y\cdot x \geq \kappa \sqrt{N}\} = g\circ H(x).\\
\end{split}
\end{equation}
For any $\sigma \in S_N$,
		\begin{equation}
		y \cdot (\sigma \circ x) = \sum_{i = 1}^N y^ix^{\sigma^{-1} (i)}= \sum_{i = 1}^N y^{\sigma (i)}x^{i} = (\sigma^{-1}\circ y)\cdot x \,.
	\end{equation}
Hence, as it is in the the previous case, $H(\sigma \circ  x) = \sigma \circ H(x)$.
\end{proof}

\begin{proof}[\bf Proof of Lemma \ref{bijection}]Note that $y_1 \circ (y_1\circ y_i) = y_i$,  the mapping $\mathbf Y \mapsto (y_1,(y_1\circ y_i)_{2 \leq i\leq k}) $ is a bijection. Also, it follows from the definition that $(y_1\circ y_i)_{2 \leq i\leq k}\mapsto \varphi_I(\mathbf Y)$ is a bijection.
\end{proof}

\begin{proof}[\bf Proof of Lemma \ref{phiI-inv}]
	For $2 \leq i \leq k$, we have $((g \circ y_1)\circ (g \circ y_i)) = (y_1\circ y_i)$. Hence $\varphi_I(g \circ \mathbf X) = \varphi_I(\mathbf X)$.
\end{proof}

\begin{proof}[\bf Proof of Lemma \ref{number match}]
	If  $|I_a(\mathbf X)| = |I_a(\mathbf Y)|$ for all $a \in \{-1,1\}^{k-1}$, then there exists $\sigma \in S_N$ such that for every $a \in \{-1,1\}^{k-1}$, $\sigma|_{I_a(\mathbf X)}$ is a bijection from $I_a(\mathbf X)$ to $I_a(\mathbf Y)$. Since for any $a \in \{-1,1\}^{k-1}$, 
	\begin{equation*}
	\begin{split}
		\sigma(I_a(\mathbf X)) &= \{ \sigma(j) \in [n]:x_1^j  x^j_i =  a_{i-1}, \forall ~2 \leq i \leq k\} \\
		&= \{ j \in [n]:x_1^{\sigma^{-1}(j)}  x^{\sigma^{-1}(j)}_i =  a_{i-1}, \forall ~2 \leq i \leq k\} = I_a(\sigma \circ \mathbf X)\,,
	\end{split}
	 \end{equation*} we have that
	\begin{equation}
		\varphi_I(\sigma \circ \mathbf X) =(I_a(\sigma \circ \mathbf X))_{a \in \{-1,1\}^{k-1}} = (\sigma(I_a( \mathbf X)))_{a \in \{-1,1\}^{k-1}} =  \varphi_I(\mathbf Y)\,.
	\end{equation}
Now set $g = y_1 \circ (\sigma \circ x_1)$. So $g\in \Sigma_N$. Then $g \circ (\sigma \circ x_1) = (y_1 \circ (\sigma \circ x_1)) \circ (\sigma \circ x_1) = y_1$ and by Lemma \ref{phiI-inv}, $\varphi_I(g \circ (\sigma \circ \mathbf X)) = \varphi_I(\sigma \circ \mathbf X) = \varphi_I(\mathbf Y)$. Therefore by Lemma \ref{bijection}, $g \circ (\sigma \circ \mathbf X) = \mathbf Y$.
\end{proof}

\section{Properties of gentle mapping}
\label{sec:proof-map}
\begin{proof}[\bf Proof of Lemma \ref{adjust}]
By Lemma \ref{bijection}, in order to define $f = (f_1, \ldots, f_k)$, it suffices to define $f_1(\mathbf Z)$ and $\varphi_I(f(\mathbf Z))$ for all $\mathbf Z\in \Sigma_N^k$. In light of this, we let
\begin{equation}
\begin{split}
		f_1(\mathbf Z) = z_1,\quad \varphi_I(f(\mathbf Z)) = Q(\varphi_I(\mathbf Z))\,,
\end{split}
\end{equation}
where $Q$ is a mapping to be defined.
This already yields \eqref{eq:f-symmetric} because by Lemma \ref{phiI-inv},
\begin{equation*}
\begin{split}
		f_1(g \circ\mathbf Z) = g \circ z_1, \quad \text{and} \quad\varphi_I(f(g \circ\mathbf Z)) = Q(\varphi_I(g \circ \mathbf Z)) = Q(\varphi_I( \mathbf Z)) = \varphi_I(g \circ f(\mathbf Z))\,.
\end{split}
\end{equation*}
We now define $Q$. First, if $\mathbf Z$ is not admissible (which depends only on $\varphi_I(\mathbf Z)$), then we let
\begin{equation}
	Q(\varphi_I(\mathbf Z)) := \varphi_I(\mathbf Z)\,.
\end{equation}
Next, if $\mathbf Z$ is admissible, since $\mathbf X$ is also admissible, we know that for all $a \in \{-1,1\}^{k-1}$,
\begin{equation*}
	\big||I_a(\mathbf Z)| - |I_a(\mathbf X)|\big| \leq 2C_2 (N\log N)^{1/2}\,.
\end{equation*}
Hence there exists $(I'_a(\mathbf Z))_{a \in \{-1,1\}^{k-1}}$ such that for all $a \in \{-1,1\}^{k-1}$,
\begin{align}
 \label{eq:1258-2}|I'_a(\mathbf Z)| &= |I_a(\mathbf X)|\,,\\
 \label{eq:1258-1}\text{and}\quad \quad \quad \quad |I_a(\mathbf Z) \triangle I'_a(\mathbf Z)| &\leq 2C_2 (N\log N)^{1/2}\,.
\end{align}
We define
\begin{equation}
	Q(\varphi_I(\mathbf Z)) := (I'_a(\mathbf Z))_{a \in \{-1,1\}^{k-1}}\,.
\end{equation}
This completes the construction of $f$. Next, we verify the remaining properties that $f$ must satisfy.

To verify \eqref{eq:f-close}, first note that if $\mathbf Z$ is not admissible, $f(\mathbf Z) = \mathbf Z$. So \eqref{eq:f-close} holds. If $\mathbf Z$ is  admissible, then for any $j \in \cup_a (I_a(\mathbf Z) \cap I'_a(\mathbf Z))$, the $j$-th component of $z_1 \circ z_i$ is the same as that of $f_1(\mathbf Z) \circ f_i(\mathbf Z)$ for $i = 2,...,k$. Then by $f_1(\mathbf Z) = z_1$ and \eqref{eq:1258-1}, for each $i$, there are at most $2^{k-1} \cdot 2C_2 (N\log N)^{1/2} $ many components where $z_i$ and $f_i(\mathbf Z)$ are not the same. Hence \eqref{eq:f-close} follows. Note that the existences of $g $ and $\sigma$ are ensured by Lemma \ref{number match} and \eqref{eq:1258-2} when $\mathbf Z$ is admissible. We complete the proof of Lemma \ref{adjust}.
\end{proof}
\begin{lemma}Recall $\mathrm{dist}(\cdot, \cdot)$ as in \eqref{eq:def-HMdist}. There exists an absolute constant $C>0$ such that
\label{angle}
$$|H(x) \setminus H(y)| \leq C(\mathrm{dist}(x,y)\log N/N)^{1/2} 2^N\,.$$
\end{lemma}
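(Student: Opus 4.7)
The plan is to bound the uniform probability $2^{-N}|H(x) \setminus H(y)|$ and then multiply back by $2^N$. Writing $d := \mathrm{dist}(x,y)$ and $S := \{i \in [N]: x_i \neq y_i\}$, the key algebraic observation is that under the change of variables $u_i := x_i z_i$, which maps the uniform measure on $\Sigma_N$ to itself, the quantities $A := \sum_{i \notin S} u_i$ and $B := \sum_{i \in S} u_i$ are independent, with $x \cdot z = A + B$ and $y \cdot z = A - B$. Hence the event $\{z \in H(x) \setminus H(y)\}$ becomes $\{A + B \geq \kappa \sqrt N\} \cap \{A - B < \kappa \sqrt N\}$, and checking the sign of $A - \kappa\sqrt N$ case by case shows that this forces $B \geq |A - \kappa \sqrt N|$.

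I would then split on whether $A$ lies close to $\kappa \sqrt N$, taking the cutoff to be $\sqrt{d \log N}$. In the close regime $|A - \kappa \sqrt N| \leq \sqrt{d \log N}$, I use the pointwise bound $\P(A = a) \leq C/\sqrt{N - d}$, which follows from Stirling's approximation for the central binomial coefficient; combined with the fact that $A$ takes at most $O(\sqrt{d \log N})$ admissible values in this window (values spaced by $2$), this contributes probability $O(\sqrt{d \log N / N})$, using $N - d = \Theta(N)$ in the nontrivial regime $d \leq N/\log N$. In the far regime $|A - \kappa \sqrt N| > \sqrt{d \log N}$, the condition $B \geq |A - \kappa \sqrt N|$ forces $|B| > \sqrt{d \log N}$; since $B$ is a sum of $d$ independent $\pm 1$ variables, Hoeffding's inequality gives $\P(|B| > \sqrt{d \log N}) \leq 2 e^{-(\log N)/2} = 2 N^{-1/2}$, which is dominated by $\sqrt{d \log N / N}$ whenever $d \geq 1$.

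Summing the two contributions and multiplying by $2^N$ yields the claimed bound. The large-$d$ regime $d \geq N/\log N$ should be disposed of at the very outset via the trivial bound $|H(x) \setminus H(y)| \leq 2^N$, since there $\sqrt{d \log N / N} \geq 1$. I do not anticipate any serious obstacle: the decomposition $x \cdot z = A + B$, $y \cdot z = A - B$ is forced by the action of sign-switching on the coordinates of $S$, and both the local density bound and Hoeffding's inequality are entirely standard. The only minor care needed is in the integer-parity bookkeeping when counting admissible values of $A$ in the close regime, and in absorbing additive $O(1)$ overcounts into the constant $C$.
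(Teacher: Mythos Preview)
Your proposal is correct and follows essentially the same route as the paper: after the change of variables $u_i = x_i z_i$ you obtain independent sums $A = \sum_{i\notin S} u_i$ and $B = \sum_{i\in S} u_i$, reduce the event to $B \ge |A - \kappa\sqrt N|$, and then split at the threshold $\sqrt{d\log N}$, handling the close regime by the pointwise Stirling bound on the law of $A$ and the far regime by Hoeffding on $B$. The paper's proof is exactly this argument (with your $A$, $B$ written as $\sum_{i>m} Z_i$, $\sum_{i\le m} Z_i$); your extra remarks on the trivial regime $d \ge N/\log N$ and parity bookkeeping are harmless refinements.
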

\begin{proof}
Let $m = \mathrm{dist}(x,y)$ and $Z = (Z_1,...,Z_N)$ be a uniform random vector in $\Sigma_N$ defined on a probability space $(\Omega,\mu)$. Then for any $b \in \R$,

\begin{equation}
	|\{ z \in \Sigma_N : z \cdot x \geq b,z \cdot y < b\}| = 2^N \mu(Z \cdot x \geq b, Z \cdot y < b)\,.
\end{equation}
By symmetry, this equals to
\begin{equation*}
	2^N \mu\Big(\sum_{i=1}^{m} Z_i + \sum_{i=m+1}^NZ_i \geq b, -\sum_{i=1}^{m} Z_i + \sum_{i=m+1}^NZ_i < b \Big) \leq	 2^N \mu\Big(\sum_{i=1}^{m} Z_i \geq \Big| \sum_{i=m+1}^NZ_i -b\Big| \Big)\,.
\end{equation*}
Note that by Hoeffding's inequality,
\begin{equation*}
	\mu\Big(\sum_{i=1}^{m} Z_i \geq (m\log N)^{1/2}\Big) \leq N^{-1/2}\,,
\end{equation*}
and Stirling's formula yields that for some absolute constant $C>0$,
\begin{equation*}
	\mu\Big(\big| \sum_{i=m+1}^NZ_i -b\big| \leq (m\log N)^{1/2} \Big) \leq \mu\Big(\big| \sum_{i=m+1}^NZ_i \big| \leq (m\log N)^{1/2} \Big) \leq C\frac{(m\log N)^{1/2}}{\sqrt{N - m}}\,.
\end{equation*}
The desired result follows from combining previous three inequalities.
\end{proof}

\begin{proof}[\bf Proof of Lemma \ref{gentlemap}]
Let $\mathbf Y^{(i)} = (Y^{(i)}_1,...,Y^{(i)}_k)$, $i = 1,\dots,N_*$ be i.i.d. uniform random vectors in $\Sigma_N^k$ defined on a probability space $(\Omega,\mu)$. 

The idea of the proof is the following. If $q(A)$ is non-negligible, then there will be a certain proportion of $\mathbf Y^{(i)}$'s, for which the intersection $A \cap \bigcap_{j = 1}^kH(f_j(\mathbf Y^{(i)}))$ is empty. This means that for those $\mathbf Y^{(i)}$'s, the vectors $\mathbf Y^{(i)}$'s themselves can remove all the points in $A$ except the difference between $\bigcap_jH(f_j(\mathbf Y^{(i)}))$ and $\bigcap_jH(Y^{(i)}_j)$. And if we use all $\mathbf Y^{(i)}$'s, what is left is at most the intersection of all those differences. Since $f$ is gentle, $f_j(\mathbf Y)$ and $Y_j$ are close and hence each difference must be very small. We will prove that the intersection of the differences is empty with high probability.

We now implement this idea. Let $E$ be the event that the number of $\mathbf Y^{(i)}$'s such that $\bigcap_{j = 1}^kH(f_j(\mathbf Y^{(i)}))$ and $A$ are disjoint is at least $\lfloor q(A) N_*/2 \rfloor$, namely,
\begin{equation}
	E := \Big\{\sum_{i = 1}^{N_*}\11_{\{A \cap \bigcap_{j = 1}^kH(f_j(\mathbf Y^{(i)}))  = \varnothing\}} \geq \lfloor q(A) N_*/2 \rfloor \Big\}\,.
\end{equation}
Note that the events $\{A \cap \bigcap_{j = 1}^kH(f_j(\mathbf Y^{(i)}))  = \varnothing\}$ for $i = 1,\dots,N_*$ are independent and each event has probability $q(A)$ as defined in \eqref{eq:def-qa}. It follows from Chernoff's inequality (see e.g. \cite[Exercise 2.3.5]{vershynin2018high}) that for some absolute constant $c>0$,
\begin{equation}
\label{eq:0409}
	\mu ( E^c) \leq \exp(- c q(A) N_*)\,.
\end{equation}
We now suppose $E$ occurs, and denote those $\mathbf Y^{(i)}$'s by $\mathbf Y^{(i_s)}$ for $s =1,...,m$ with $ m\geq \lfloor q(A) N_*/2 \rfloor$. Let
\begin{equation}
	A_0 := A \cap \bigcap_{i = 1}^{N_*} \bigcap_{j = 1}^k H(Y_j^{(i)}) \,.
\end{equation}
Note that for $s =1,...,\lfloor q(A) N_*/2 \rfloor$,
\begin{equation}
	 A \cap \bigcap_{j = 1}^k H(f_j(\mathbf Y^{(i_s)})) = \varnothing \,.
\end{equation}
Thus, for every $x \in A_0$, we have that $x \in H(Y_j^{(i_s)})$ for all $j \in \{1,...,k\}$, but $ x \not \in H(f_{j'}(\mathbf Y^{(i_s)}))$ for some $j' \in \{1,...,k\}$. Hence
\begin{equation*}
	A_0  \subset \bigcap_{s =1}^{\lfloor q(A) N_*/2 \rfloor} \Big[ \bigcup_{j = 1}^k H(Y_j^{(i_s)}) \cap \bigcup_{j = 1}^k H(f_j(\mathbf Y^{(i_s)})^c \Big] \subset \bigcap_{s =1}^{\lfloor q(A) N_*/2 \rfloor} \Big[ \bigcup_{j = 1}^k \Big(H(Y_j^{(i_s)}) \setminus H(f_j(\mathbf Y^{(i_s)}))\Big) \Big]\,.
\end{equation*}

Next, we prove that the right-hand side of the previous equation is empty with high probability by a first moment computation. Let $\mathbf Y = \mathbf Y^{(1)}$. We claim that for every $z \in \Sigma_N$,
\begin{equation}
\label{eq:0342}
	\mu\Big[z \in \bigcup_{j = 1}^k \Big(H(Y_j)  \setminus H(f_j(\mathbf Y))\Big) \Big] \leq N^{-1/25}.
\end{equation}
To prove \eqref{eq:0342}, we first prove that this probability does not depend on $z$.
For any fixed $g \in \Sigma_N$, it follows from Lemma \ref{autoH} and \eqref{eq:f-symmetric} that this probability equals to
\begin{equation*}
\begin{split}
\mu\Big[g \circ z \in \bigcup_{j = 1}^k \Big(H(g \circ Y_j) \setminus H(g \circ f_j(\mathbf Y))\Big)  \Big]=\mu\Big[g \circ z \in \bigcup_{j = 1}^k \Big( H(g \circ Y_j) \setminus H( f_j(g \circ\mathbf Y))  \Big) \Big]\,.  	
\end{split}
\end{equation*}
Note that $g \circ\mathbf Y$ has the same distribution as $\mathbf Y$. Therefore for an arbitrary $g \in \Sigma_N$,
\begin{equation*}
	\mu\Big[z \in \bigcup_{j = 1}^k \Big(H(Y_j) \setminus H(f_j(\mathbf Y)) \Big) \Big]  = \mu\Big[g \circ z \in \bigcup_{j = 1}^k \Big(H(Y_j) \setminus H(f_j(\mathbf Y)) \Big) \Big]\,.
\end{equation*}
Then it follows that 
\begin{equation}
\label{eq:55556}
	\mu\Big[z \in \bigcup_{j = 1}^k \Big(H(Y_j) \setminus H(f_j(\mathbf Y)) \Big) \Big] = 2^{-N} \E_\mu \Big[ |\bigcup_{j = 1}^k \Big(H(Y_j) \setminus H(f_j(\mathbf Y)) \Big)| \Big]\,. 
\end{equation}
Note that combining \eqref{eq:f-close} and the assumption $k \leq \log N/2$ gives
$$ \mathrm{dist}(Y_j,f_j(\mathbf Y)) \leq N^{9/10}\,.$$
Then by Lemma \ref{angle}, there exists an absolute constant $C>0$ such that
\begin{equation}
	|H(Y_j) \setminus H(f_j(\mathbf Y))| \leq C2^N N^{-1/20} (\log N)^{1/2}\,.
\end{equation}
Combined with \eqref{eq:55556}, it yields \eqref{eq:0342}.
Then, it follows from Chernoff's inequality (for large deviations, see e.g. \cite[Theorem 2.3.1]{vershynin2018high}) and the condition $q(A) \geq (\log N)^{-1/3}$ that for any $z$,
\begin{equation}
	\mu\Big ( \sum_{i = 1}^{N_*}\11_{\Big\{z \in \bigcup_{j = 1}^k \Big(H(Y^{(i)}_j) \setminus H(f_j(\mathbf Y^{(i)}))  \Big) \Big\}} \geq \lfloor q(A) N_*/2 \rfloor\Big) \leq \exp(-c q(A) \cdot \log N \cdot N_*)\,.
\end{equation}
Recall that $N_* = \lfloor N/ (\log N)^{1/2} \rfloor$. We have that
\begin{equation*}
\begin{split}
		\E_\mu \Big[ |A_0| \11_E\Big] &\leq \E_\mu \Big[ \Big| \bigcap_{s =1}^{\lfloor q(A) N_*/2 \rfloor} \Big[\bigcup_{j = 1}^k \Big(H(Y_j^{(i_s)}) \setminus H(f_j(\mathbf Y^{(i_s)})) \Big) \Big]\Big| \11_E\Big] \\
		& \leq \E_\mu \Big| \Big\{ z \in \Sigma_N:\sum_{i = 1}^{N_*}\11_{\Big\{z \in \bigcup_{j = 1}^k \Big(H(Y^{(i)}_j) \setminus H(f_j(\mathbf Y^{(i)}))  \Big) \Big\}} \geq \lfloor q(A) N_*/2 \rfloor \Big\}\Big|\Big]\\
	&\leq 2^N \exp(-c q(A) \cdot \log N \cdot N_*) \leq \exp(-N (\log N)^{1/10})\,.
\end{split}
\end{equation*}
Combined with \eqref{eq:0409}, this gives $$\mu(A_0 \not = \varnothing) \leq \mu(E^c) + \mu(A_0\not = \varnothing, E)\leq \exp(- c q(A) N_*) +\exp(-N (\log N)^{1/10})\,.$$
Hence \eqref{eq:64732} follows.
\end{proof}

\section{Proof of Lemma \ref{ublb-kappa}}
\label{sec:ublb}
For the same reason as in \eqref{eq:com123}, it suffices to prove the following statement. 

Let $(Y_i)_{i \geq 1}$ be a sequence of i.i.d. uniform random vectors in $\Sigma_N$. Then there exist constants $C_3 \geq c_4>0$ and $c>0$ depending only on $\kappa$ such that for sufficiently large $N$,
\begin{align}
	&  \P\Big(\bigcap_{i = 1}^{\lfloor C_3 N \rfloor} H(Y_i) = \varnothing \Big) \leq e^{-c N} \label{eq:ub-tal}\,,\\
	&\P\Big(\bigcap_{i = 1}^{\lfloor c_4 N \rfloor} H(Y_i) = \varnothing \Big) \geq 1 - e^{-c N}\label{eq:lb-tal}\,.
\end{align}

The first inequality \eqref{eq:ub-tal} follows from a first moment computation: Since for all $z \in \Sigma_N$, $\P(z \in H(Y_1)) \leq 2^{-c(\kappa)}$ for some constant $c(\kappa)>0$ only depending on $\kappa$,
\begin{equation}
	\E\big[ \big|\bigcap_{i = 1}^{\lfloor C_3 N \rfloor} H(Y_i) \big|\big] =  2^N \P(z \in H(Y_1))^{\lfloor C_3 N \rfloor} \leq 2^{(1 - c(\kappa) C_3/2) N}\,.
\end{equation}

For the second inequality \eqref{eq:lb-tal}, we can adapt the proof of \cite[Theorem 1.3]{Tala99}, which gives \eqref{eq:lb-tal} for $\kappa = 0$. In fact, the proof of \cite[Theorem 1.3]{Tala99} only used $\kappa = 0$ in \cite[Lemma 4.2]{Tala99}, which corresponds to the following lemma when $\kappa = 0$.
	\begin{lemma}
	\label{tal-lemma}
	Let $\xi$ be a uniform random vector in $\Sigma_N$. There exist constants $K,\gamma >0$ depending only on $\kappa$ with the following property: given $\sigma^1,...,\sigma^r$ in $\Sigma_N$ such that 
	\begin{equation}
			\text{for all } p,q \leq r, \quad p \not = q \implies \frac{1}{N}\sum_{i \leq N}\sigma_i^p\sigma_i^q \leq 1/2\,,
		\end{equation}	
	we have
	\begin{equation}
	\label{eq:tal-lemma}
		\P(\sigma^p \not \in H(\xi), p = 1,\dots,r) \leq K r^{-\gamma}\,.
	\end{equation}
	\end{lemma}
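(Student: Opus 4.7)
Set $G_p := \langle \sigma^p, \xi\rangle/\sqrt N$ for $p=1,\dots,r$; each $G_p$ has mean zero, variance one, and pairwise covariances $\rho_{pq} = \langle\sigma^p,\sigma^q\rangle/N \leq 1/2$ by hypothesis. The probability in question equals $\P(G_p < \kappa,\ p=1,\dots,r)$. My plan is to first handle the Gaussian surrogate via a Slepian-type comparison, and then check that replacing $\xi$ by the true Rademacher vector costs only a lower-order correction.

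\textbf{Step 1 (Gaussian model).} Let $(Z_1,\dots,Z_r)$ be centered jointly Gaussian with the same covariance matrix as $(G_1,\dots,G_r)$. Slepian's inequality implies that $\P(Z_p<\kappa,\forall p)$ is nondecreasing in each off-diagonal correlation, so it is bounded above by the equal-correlation model with all $\rho_{pq}=1/2$. In that model we write $Z_p = (W + \eta_p)/\sqrt 2$ with $W,\eta_1,\dots,\eta_r$ i.i.d.\ standard Gaussian, giving
$$\P(Z_p < \kappa,\ \forall p) \leq \E\bigl[\Phi(\sqrt 2\,\kappa - W)^r\bigr].$$
Splitting at $W = -\alpha\sqrt{2\log r}$ for a fixed $\alpha\in(0,1)$ and using $\Phi(u)\leq e^{-\Phi(-u)}$ on the low-$W$ region together with a Gaussian tail estimate on the high-$W$ region yields an upper bound of order $r^{-\alpha^2 + o(1)}$, with implicit constant depending only on $\kappa$. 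This is the Gaussian analogue of the claim.

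\textbf{Step 2 (Rademacher to Gaussian, and main obstacle).} The event $\{G_p<\kappa,\forall p\}$ is a convex set (an intersection of half-spaces) in $\R^r$. A Berry--Esseen bound on convex sets of Bentkus type controls the discrepancy $|\P(G\in A)-\P(Z\in A)|$ by $C\,r^{O(1)}/\sqrt N$, since each Rademacher coordinate contributes bounded third moment. Provided $r$ is not too large, this error is negligible compared with the $r^{-\gamma}$ obtained in Step~1, and the lemma follows. The delicate point, and the main obstacle, is the regime where $r$ is allowed to grow as a large power of $N$: there, a naive multidimensional CLT has error terms that compete with the Gaussian tail estimate. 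The cleanest robust route, which I expect to follow, is to mimic Talagrand's original argument for the $\kappa=0$ case in \cite[Lemma 4.2]{Tala99}, replacing $0$ by $\kappa$ throughout. Each occurrence of $\P(\sigma\cdot\xi\geq 0)=1/2$ is replaced by a bound $\P(\sigma\cdot\xi\geq \kappa\sqrt N)\geq c(\kappa)>0$ coming from a one-dimensional Berry--Esseen estimate, and the combinatorial second-moment core of Talagrand's proof — which is where the constraint $\rho_{pq}\leq 1/2$ is actually used — carries over unchanged because the centering $\kappa\sqrt N$ does not interact with the correlation structure between the $\sigma^p$'s. This yields \eqref{eq:tal-lemma} with constants $K,\gamma>0$ depending only on $\kappa$.
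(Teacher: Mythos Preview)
Your high-level plan---extend Talagrand's $\kappa=0$ argument to general $\kappa$---is exactly what the paper does, but your description of that argument is off. There is no ``combinatorial second-moment core'' and no use of $\P(\sigma\cdot\xi\geq 0)=1/2$. Talagrand's proof, and the paper's, works with $Y := \max_{p\leq r}\langle \sigma^p,\xi\rangle$ directly: concentration of measure for Rademacher processes gives
\[
\P\bigl(Y \leq \E Y - K_1(1+u)\sqrt N\bigr) \leq 4e^{-u^2},
\]
and a Bernoulli Sudakov minoration (this is where the separation condition $\rho_{pq}\leq 1/2$ enters) gives $\E Y \geq K_2^{-1}\sqrt{N\log r}$. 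The extension to $\kappa\neq 0$ is then a one-line adjustment: once $r$ is large enough depending only on $(\kappa,K_1,K_2)$ one has $\E Y/2 > \kappa\sqrt N$, so setting $u = \E Y/(4K_1\sqrt N) \asymp \sqrt{\log r}$ yields $\P(Y < \kappa\sqrt N) \leq 4r^{-c}$.

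Your Slepian--Berry--Esseen route in Steps~1--2 is a genuinely different approach and is correct for the Gaussian surrogate, but the obstacle you flag is fatal, not merely delicate: the lemma must hold with $K,\gamma$ independent of $N$, so in particular for $r$ of any size relative to $N$, and any multivariate CLT error of the form $r^{O(1)}/\sqrt N$ is useless once $r$ is, say, exponential in $N$. The concentration-plus-minoration argument sidesteps this completely because it is native to the Rademacher setting and never invokes a Gaussian approximation.
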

Therefore, to prove \eqref{eq:lb-tal}, it suffices to prove Lemma \ref{tal-lemma} for all $\kappa \in \R$. And the rest of the proof \cite[Theorem 4.1, Corollary 4.3, Proof of Theorem 1.3]{Tala99} can be carried out exactly as in \cite{Tala99}. In fact, it is straight forward to extend \cite[Lemma 4.2]{Tala99} by modifying its original proof as follows. 

	Consider the random variable \begin{equation}
			Y = \sup_{p \leq r} \xi \cdot \sigma^p = \sup_{p \leq r} \sum_{i \leq N}\xi_i \sigma_i^p\,.
		\end{equation}
As in \cite[Lemma 4.2]{Tala99}, using concentration of measure for Bernoulli random variable \cite[(2.17)]{Tal93} yields that for some absolute constant $K_1 >0$,
		\begin{equation}
		\label{eq:Tal-1}
			\P(Y \leq \E Y - K_1(1 + u) \sqrt{N}) \leq 4\exp(-u^2)\,.
		\end{equation}
In addition, it is proved in \cite[Lemma 4.2]{Tala99} by a version of Sudakov minoration for Bernoulli random variable that for some absolute constant $K_2 > 0$,
		\begin{equation}
		\label{eq:Tal-2}
			\E Y \geq \frac{\sqrt{N}}{K_2} \sqrt{\log r}\,.
		\end{equation} 
		We now let
	\begin{equation}
		u := \E Y/(4K_1\sqrt{N})\,.
	\end{equation}
Then by \eqref{eq:Tal-1} and \eqref{eq:Tal-2}, we have
			\begin{equation*}
			\P(Y \leq \E Y - K_1(1 + u) \sqrt{N}) \leq 4\exp(-(\E Y)^2/(16K_1^2N)) \leq 4\exp(-\log r/(16 K_1^2K_2^2))\,.
		\end{equation*}
For sufficiently large $r$ depending only on $(\kappa,K_2,K_1)$,
	\begin{equation*}
		K_1(1 + u) \sqrt{N} \leq 2 K_1 u \sqrt{N} = \E Y/2 < \E Y - \kappa \sqrt{N}\,.
	\end{equation*}
Combining previous two inequalities yields \eqref{eq:tal-lemma}, and thus we complete the proof of Lemma \ref{ublb-kappa}.

\section{Proof of Proposition \ref{01Sharpthm}}
\label{sec:STT}

This proposition is an improvement of \cite[Corollary 2.10]{H12} and a large part of the proof below originates from the proof of \cite[Corollary 2.10]{H12}.

Recall $J_\mathcal J$, $\mathcal{F}_\mathcal{J}$ and pseudo-junta as defined around Definition \ref{pseudo-junta}. It was shown in \cite[Theorem 2.8]{H12} that there exist a collection of boolean functions $\mathcal{J}=\{J_S\}_{S \subseteq [n]}$ and
a pseudo-junta $h:\{0,1\}^n \rightarrow \{0,1\}$ measurable with respect to $\mathcal{F}_\mathcal{J}$, such that  
\begin{equation}
\label{eq:J and h}
	\E_p[ |J_\mathcal{J}(x)|] \le e^{10^{11} \lceil I_f \rceil^2 (\delta\epsilon_N)^{-2}}, \quad \text{and} \quad \mu_p( f \not =  h) \le \delta\epsilon_N/2\,.
\end{equation}
Set $k:= e^{10^{12} \lceil I_f \rceil^2  (\delta\epsilon_N)^{-2}}$, and let
\begin{equation}
\label{eq:321max}
	\beta := \max_{(S,y)} \E_p \left[f(x) \mid x_{S}=y, J_\mathcal{J}(x)=S,  \sfb_{\delta,k}^c \right],
\end{equation}
where the maximum is over $(S, y)$ such that
\begin{equation}
	\label{eq:SyB}S \subset [n], \quad |S| \leq k, \quad y \in \{0,1\}^S, \quad \{x_{S}=y, J_\mathcal{J}(x)=S\} \cap  \sfb_{\delta,k}^c \not = \varnothing \,. 
\end{equation}

We first prove that $\beta < 1- \delta$. Suppose $\beta \geq 1- \delta$, then there exists a subset $S \subseteq [n]$ with $|S| \le k$ and $y_0 \in \{0,1\}^S$ such that
\begin{equation}
\label{eq:denseAtom}
\E_p \Big[f(x) \mid x_{S}=y_0, J_\mathcal{J}(x)=S,\sfb_{\delta,k}^c \Big]\geq 1- \delta.
\end{equation}
As in \cite[Corollary 2.10]{H12}, we define $h_1, h_2:\{0,1\}^{[n]\setminus S} \rightarrow \{0,1\}$ as $h_1:x \mapsto f(y_0,x)$, and $h_2:x \mapsto 1_{[J_\mathcal{J}(y_0,x)=S]}$. Then we can rewrite (\ref{eq:denseAtom}) as
\begin{equation}
\label{eq:denseAtomRewrite}
\E_p \Big[h_1(x_{[n] \setminus S}) \mid h_2(x_{[n] \setminus S})=1 ,\sfb_{\delta,k}^c\Big]\geq 1- \delta\,.
\end{equation}
Note that that $f$ is an increasing function implies that $h_1$ is increasing. And it was proved in the proof of \cite[Corollary 2.10]{H12} that we can assume that  $h_2$ is decreasing (because we can assume $J_S$ are increasing by \cite[Remark 2.9]{H12} for $p\leq 1/2$.) Also, it is clear that $\11_{\sfb_{\delta,k}^c}$ is decreasing. Then it follows from FKG inequality that
$$ \E_p \Big[f(x) \mid x_{S}=y_0 \Big] = \E_p[h_1(x_{[n] \setminus S})] \geq \E_p \Big[h_1(x_{[n] \setminus S}) \mid h_2(x_{[n] \setminus S})=1,\sfb_{\delta,k}^c \Big] \geq 1- \delta.$$
Let $S' \subset S$ be the coordinates of $y_0$ with value $1$. Since $f$ is increasing,
\begin{equation*}
	\E_p \Big[f(x) \mid x_{S'}=(1,\ldots,1) \Big]  \geq \E_p \Big[f(x) \mid x_{S'}=(1,\ldots,1),x_{S \setminus S'} = (0,\ldots,0)  \Big]\geq 1- \delta\,.
\end{equation*}
This means that $\{x_{S} = y_0\} \subset \sfb_{\delta,k}$, which contradicts with \eqref{eq:SyB}.

Next, note that
\begin{equation*}
\begin{split}
	  	\mu_p(f = 1, \sfb_{\delta,k}^c) \leq \mu_p(|J_\mathcal{J}| \leq k,f = 1,h=0,\sfb_{\delta,k}^c)+ \mu_p(|J_\mathcal{J}| \leq k,h=1,\sfb_{\delta,k}^c) + \mu_p(|J_\mathcal{J}| > k)\,.
\end{split}
  \end{equation*}
Since $f$ and $h$ are Boolean and $h$ is measurable with respect to $\mathcal{F}_\mathcal{J}$, \eqref{eq:321max} implies
\begin{equation*}
	 \mu_p(f = 0 , |J_\mathcal{J}| \le k, h = 1,\sfb_{\delta,k}^c) \geq (1 - \beta) \mu_p(|J_\mathcal{J}| \le k, h = 1,\sfb_{\delta,k}^c)\,.
\end{equation*}
Combining previous two inequalities gives
\begin{equation*}
	\mu_p(f = 1, \sfb_{\delta,k}^c) \leq (1 - \beta)^{-1} \mu_p(f \not = h, \sfb_{\delta,k}^c) + \mu_p(|J_\mathcal{J}| > k)\,.
\end{equation*}
Combined with \eqref{eq:J and h} and $\beta < 1 - \delta$, this completes the proof of Proposition \ref{01Sharpthm}.\qed
\section*{Acknowledgments}
I would like to thank Jian Ding for suggesting the problem and carefully reviewing the draft of the paper.
\small
\bibliography{sharp}
\bibliographystyle{abbrv}
\end{document}